\documentclass[11pt,reqno]{amsart}
\usepackage{amssymb,amsmath,amsthm,newlfont}
\usepackage[shortlabels]{enumitem}

\theoremstyle{plain}
\newtheorem{theorem}{Theorem}[section]
\newtheorem{proposition}[theorem]{Proposition}
\newtheorem{lemma}[theorem]{Lemma}
\newtheorem{corollary}[theorem]{Corollary}
\newtheorem{problem}[theorem]{Problem}

\theoremstyle{definition}

\theoremstyle{remark}
\newtheorem*{remark}{Remark}

\newcommand{\cB}{\mathcal{B}}
\newcommand{\cD}{\mathcal{D}}
\newcommand{\cE}{\mathcal{E}}
\newcommand{\cH}{\mathcal{H}}

\newcommand{\CC}{\mathbb{C}}
\newcommand{\DD}{\mathbb{D}}
\newcommand{\RR}{\mathbb{R}}
\newcommand{\ZZ}{\mathbb{Z}}
\newcommand{\TT}{\mathbb{T}}

\renewcommand{\hat}{\widehat}

\DeclareMathOperator{\supp}{supp}

\begin{document}

\title{A pathological de Branges--Rovnyak space}

\author[T. Ransford]{Thomas Ransford}
\address{D\'epartement de math\'ematiques et de statistique, Universit\'e Laval, Qu\'ebec (QC), G1V 0A6, Canada}
\email{ransford@mat.ulaval.ca}

\date{10 September 2026}

\begin{abstract}
We construct a de Branges--Rovnyak space containing the polynomials as a dense subspace, together with a function in the space 
that lies outside the closed linear span of its Taylor partial sums.
The space has the further property that the monomials are bounded in norm. The construction is carried out by reducing it to an elementary problem about series, which
we then  solve.
\end{abstract}

\thanks{Research supported  by NSERC Discovery Grant RGPIN-2026-04565}

\keywords{Function spaces, Taylor series, Summability methods, De Branges--Rovnyak spaces}

\makeatletter
\@namedef{subjclassname@2020}{\textup{2020} Mathematics Subject Classification}
\makeatother

\subjclass[2020]{primary 30H45; secondary 40J05, 41A58}

\maketitle

\section{Introduction}\label{S:Intro}

Let $X$ be a \emph{holomorphic function space} on the open unit disk $\DD$.
By this, we mean that $X$ is a Banach space 
of holomorphic functions on $\DD$, containing the polynomials, 
and with the property that convergence in the norm of $X$ implies
local uniform convergence in $\DD$. 

For $f\in X$ and $n\ge0$, we write $s_n(f)(z):=\sum_{k=0}^n \hat{f}(k)z^k$,
where $\hat{f}(k):=f^{(k)}(0)/k!$.
Note that $s_n(f)\in X$, as $X$  contains the polynomials.
Of course $s_n(f)\to f$ locally uniformly in $\DD$ as $n\to\infty$. 
What about convergence in the norm of $X$?

Various possibilities can arise:

\begin{enumerate}[(I)]
\item \label{i:1}
For all $f\in X$, we have $\|s_n(f)-f\|_X\to0$ as $n\to\infty$.
Many spaces exhibit this `good' behaviour, for example the Hardy spaces $H^p$ for $1<p<\infty$, 
the Bergman spaces $A^p$ for $1<p<\infty$, and the Dirichlet space $\cD$.
\item \label{i:2}
It is not the case that $\|s_n(f)-f\|_X\to0$ for all $f\in X$, but $X$ \emph{admits a convergent summability method}.
This means that there exists an infinite complex matrix $A=(a_{nk})$ such that, for each $f\in X$ and each $n\ge0$,
the series $S_n^A(f):=\sum_{k=0}^\infty a_{nk}s_k(f)$ converges in the norm of~$X$ and
$\|S_n^A(f)-f\|_X\to0$ as $n\to\infty$.
For example, this is the case if $X$ is the disk algebra $A(\DD)$, the Hardy space $H^1$,
the Bergman space $A^1$, the space $VMOA$ of analytic functions of vanishing mean oscillation,
the little Bloch space $\cB_0$, and the local Dirichlet space~$\cD_1$.

\item \label{i:3}
Polynomials are not norm-dense in $X$. Evidently, in this case, $X$  admits no convergent 
summability method. Examples of spaces of this type include the Hardy space $H^\infty$,
the Bloch space $\cB$, and the space $BMOA$ of analytic functions of bounded mean oscillation.
\end{enumerate} 

For details on all these cases, we refer to the article \cite{GMR24} and the references cited therein.

In this paper, we study the case of de Branges--Rovnyak spaces
$\cH(b)$ (these will be defined in detail in \S\ref{S:deBR}).
Our main result exhibits an example of 
a de Branges--Rovnyak space that
is pathological, in the sense that it
falls into none of the categories \ref{i:1}--\ref{i:3} above.

\begin{theorem}\label{T:main}
There exist a de Branges--Rovnyak space $\cH(b)$ and a function
$f\in\cH(b)$ with the following properties:
\begin{itemize}
\item $\cH(b)$ contains the polynomials as a dense subspace;
\item $f$ lies outside the closed subspace spanned by $\{s_n(f):n\ge0\}$;
\item $\sup_{n\ge0}\|z^n\|_{\cH(b)}<\infty$;
\item $b$ has real coefficients.
\end{itemize}
\end{theorem}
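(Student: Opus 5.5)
The plan is to take $b$ non-extreme and reduce everything to an elementary problem about real sequences. Given a real-coefficient $\varphi=\sum_{k\ge0}c_kz^k\in H^2$, let $a$ be the outer function with $|a|^2=1/(1+|\varphi|^2)$ a.e.\ on $\TT$ and $a(0)>0$, and put $b:=\varphi a$. Then $|a|^2+|b|^2=1$ on $\TT$, so $b$ is non-extreme with Pythagorean mate $a$. Since $|a|^2$ is even under conjugation, $a$ and hence $b$ have real coefficients. I intend to use the standard facts on $\cH(b)$ for non-extreme $b$, recalled in \S\ref{S:deBR}:
\begin{itemize}
\item the polynomials are dense in $\cH(b)$;
\item $f\in\cH(b)$ if and only if $T_{\bar b}f=T_{\bar a}f^+$ for some $f^+\in H^2$, and then $\|f\|_{\cH(b)}^2=\|f\|_2^2+\|f^+\|_2^2$.
\end{itemize}
With $\varphi=b/a$ the condition becomes $f^+=T_{\bar\varphi}f$. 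Concretely, $\cH(b)$ is the space of $f\in H^2$ for which the coefficients $g_j:=\sum_{k\ge j}\hat f(k)c_{k-j}$ converge and $(g_j)\in\ell^2$. The norm is the graph norm $\|f\|^2=\sum|\hat f(k)|^2+\sum|g_j|^2$. To justify this in general (not only for polynomial $f$) I would approximate $\varphi$ by $\varphi_r$ and use $T_{\bar\varphi}f=P_+(\bar\varphi f)$.

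In this model $T_{\bar\varphi}z^n=\sum_{j\le n}c_{n-j}z^j$. Hence $\|z^n\|^2=1+\sum_{k\le n}c_k^2\le 1+\|\varphi\|_2^2$, which gives the third bullet of Theorem~\ref{T:main}. The first and fourth bullets are automatic from the construction above.

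Next I recast the second bullet via Hahn--Banach. Since $\cH(b)$ embeds isometrically as a graph in $\ell^2\times\ell^2$, every continuous functional is given by a pair $(u,v)\in\ell^2\times\ell^2$. Choose $f$ with $\hat f(n)\ne0$ for all $n$. Then annihilating every $s_n(f)$ is equivalent to annihilating every monomial, i.e.\ $u_n+\sum_{j\le n}c_{n-j}v_j=0$, i.e.\ $u=-(c*v)$. Thus $f$ lies outside $\overline{\mathrm{span}}\{s_n(f)\}$ as soon as we find real sequences $c,v,f$ with the following properties:
\begin{itemize}
\item $c,v,f\in\ell^2$ and $c*v\in\ell^2$;
\item each $g_j=\sum_{k\ge j}f_kc_{k-j}$ converges and $(g_j)\in\ell^2$;
\item the two iterated sums of the double series $\sum_{j\le k}f_kc_{k-j}v_j$ differ, namely
\[\sum_k f_k\sum_{j\le k}c_{k-j}v_j\ne\sum_j v_j\sum_{k\ge j}f_kc_{k-j}.\]
\end{itemize}
This is exactly the failure of Fubini for a conditionally convergent double series, and it is the elementary series problem the abstract refers to.

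The main obstacle is solving this series problem while keeping all the $\ell^2$ constraints. My first attempt would be a block (lacunary) construction. Take $c$ supported on sparse blocks with small oscillating values, so that $c\in\ell^2$ but the tails $\sum_{k\ge j}f_kc_{k-j}$ converge only by cancellation. Take $v$ and $f$ built from matching blocks at scales $N_m\to\infty$, chosen so that block $m$ contributes about $\epsilon_m$ to one iterated sum and about $\epsilon_m+\delta$ to the other, with the discrepancy concentrated near the diagonal $k\approx j$. Summing a geometric family of such blocks, with $\delta_m$ summing to a nonzero constant, should give the discrepancy. The $\ell^2$ norms stay controlled by scaling the block amplitudes like $N_m^{-1/2}$ times a summable factor. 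I expect the delicate point to be checking that $c*v$ and $(g_j)$ remain square-summable despite interactions between different blocks. I would handle this by choosing the $N_m$ growing fast enough that the cross-block terms are negligible.
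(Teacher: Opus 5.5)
There is a genuine gap at the step ``choose $f$ with $\hat f(n)\ne0$ for all $n$''. For such $f$, the span of $\{s_n(f):n\ge0\}$ is the span of all monomials, i.e.\ the polynomials. These are dense in $\cH(b)$ by the first fact you intend to use, so $f$ automatically lies in the closed span. In your notation: if $u=-(c*v)\in\ell^2$, then $\Lambda(h):=\sum_k\hat h(k)u_k+\sum_j g_j(h)v_j$ satisfies $|\Lambda(h)|\le(\|u\|_2+\|v\|_2)\|h\|_{\cH(b)}$ and $\Lambda(z^n)=0$ for all $n$, hence $\Lambda\equiv0$. So under exactly your constraints the two iterated sums are \emph{always} equal, and your series problem has no solution. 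This is the last assertion of Theorem~\ref{T:Hilbert} (see also Proposition~\ref{P:delicate}(vi)): $\supp\hat f$ must have infinite complement. The correct reduction imposes annihilation only for $n\in S:=\supp\hat f$, leaving $u_n$ free off $S$. Then ``$c*v\in\ell^2$'' becomes $\sum_{n\in S}|(c*v)_n|^2<\infty$, and by the argument above $c*v$ must fail to be square-summable on $\ZZ^+\setminus S$. In the paper's normalization, where the same $g$ serves as vector and functional, this is \eqref{E:cond2} imposed only for $j\in\supp\alpha$.

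Even after this repair, your block sketch lacks the mechanism that produces the discrepancy. Since $c,f\in\ell^2$, each $g_j$ converges absolutely by Cauchy--Schwarz, so nothing converges ``only by cancellation''. Set $I_1:=\sum_k f_k(c*v)_k$ and $I_2:=\sum_j v_jg_j$. For each $N$ one has $\sum_{k\le N}f_k(c*v)_k=\sum_{j\le N}v_j\sum_{k=j}^{N}f_kc_{k-j}$, whence $I_2-I_1=\lim_{N\to\infty}\sum_{j\le N}v_j\sum_{k>N}f_kc_{k-j}$. So finitely supported data give no discrepancy at all, and a block cannot contribute ``$\epsilon_m$ to one sum and $\epsilon_m+\delta$ to the other''. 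The discrepancy consists entirely of interactions between $v$ at small indices and $f$ at large indices, which are exactly the cross-block terms you plan to make negligible. Proposition~\ref{P:delicate} shows how rigid any solution must be.

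The paper overcomes this in three steps:
\begin{enumerate}[(a)]
\item It recasts \eqref{E:cond1}--\eqref{E:cond2} as $\ker(D_\Gamma^*-iI)\ne\{0\}$, i.e.\ failure of essential self-adjointness of a symmetric operator (Lemma~\ref{L:ker}).
\item It deletes entries of $\Gamma$ to get a model $\Delta$ in which consecutive scales are deliberately coupled, $\Delta V_n=\nu_nU_n+\mu_{n+1}U_{n+1}$. The coefficients obey \eqref{E:recurr}, so $(-iV,U)$ is an explicit eigenvector (Lemma~\ref{L:eigenvector}).
\item It uses the integer encoding and decoding principle to show that the deleted part $\cE$ is bounded, so the deficiency survives (Lemma~\ref{L:perturb}).
\end{enumerate}
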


This theorem solves a number of previously open problems.

First of all, it answers a question posed in \cite{MR18}
as to whether every de Branges--Rovnyak space
in which polynomials are dense admits a convergent
summability method. It was previously known that
various specific summability methods can fail
\cite{EFKMR16, GMR24, MPR22b, MR18}, but the general question remained open.
Theorem~\ref{T:main} now provides a definitive negative answer.

Secondly, it is known
that a necessary condition
for $\cH(b)$ to fall into category~\ref{i:1} above
is that $\sup_{n\ge0}\|z^n\|_{\cH(b)}<\infty$
(see \cite[Corollary~6.11]{GMR24}).
Theorem~\ref{T:main} shows that the converse fails
in the following strong sense: 
the condition $\sup_{n\ge0}\|z^n\|_{\cH(b)}<\infty$
is insufficient to imply that $\cH(b)$ falls into 
any of the  categories~\ref{i:1}--\ref{i:3}.

Thirdly, we believe that the space $\cH(b)$ is the
first known example of a `concrete' function space that lies outside
categories~\ref{i:1}--\ref{i:3}.
The previous examples of pathological function spaces,  
presented in  \cite{MPR22a} and \cite{MR19}, were 
arti\-ficial constructs, not naturally occurring spaces.
Moreover, in all of these examples, the monomials $z^n$
were unbounded. Thus the  $\cH(b)$ in Theorem~\ref{T:main}
is  the first instance of a function space,
either natural or artificial, 
that lies outside categories~\ref{i:1}--\ref{i:3},
but in which $\sup_{n\ge0}\|z^n\|<\infty$.

We conclude this section with an application
of Theorem~\ref{T:main} to dilations.
Given $f$ holomorphic on $\DD$ and $\omega\in\DD$,
the \emph{$\omega$-dilation} of $f$ is the holomorphic function $f_\omega$ defined by $f_\omega(z):=f(\omega z)~(z\in\DD)$.

\begin{corollary}\label{C:dilate}
Let $\cH(b)$ and $f$ be as in Theorem~\ref{T:main}.
Then $f_\omega\in\cH(b)$ for all $\omega\in\DD$,
and $f$ lies outside the closed subspace spanned
by $\{f_\omega:\omega\in\DD\}$.
\end{corollary}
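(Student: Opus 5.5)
The plan is to write each dilation $f_\omega$ as a norm-convergent combination of the partial sums $s_n(f)$. Two facts would then finish the argument: every $f_\omega$ lies in the closed span $M$ of $\{s_n(f):n\ge0\}$, and $f\notin M$ by Theorem~\ref{T:main}. Since the closed span of $\{f_\omega\}$ is then contained in $M$, it cannot contain $f$.

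\emph{Step 1: Abel summation.} Fix $\omega\in\DD$ and write $a_k:=\hat f(k)$. Formally,
\[
f_\omega(z)=\sum_{k\ge0} a_k\omega^k z^k=\sum_{n\ge0}(\omega^n-\omega^{n+1})\,s_n(f)(z),
\]
because the coefficient of $a_kz^k$ on the right is $\sum_{n\ge k}(\omega^n-\omega^{n+1})=\omega^k$. To make this rigorous in $\cH(b)$, I would show that the series $\sum_n (\omega^n-\omega^{n+1})s_n(f)$ converges in norm. For that it suffices to have $\sup_n\|s_n(f)\|_{\cH(b)}<\infty$, or merely polynomial growth in $n$, since the weights decay geometrically.

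\emph{Step 2: bounding the partial sums.} By the third property in Theorem~\ref{T:main}, $C:=\sup_k\|z^k\|_{\cH(b)}<\infty$. Hence $\|s_n(f)\|\le C\sum_{k\le n}|a_k|$. Because $f$ is holomorphic on $\DD$ we only know $|a_k|\le C_r r^{-k}$ for each $r<1$, which is too weak by itself, so I would use more information. Every $f\in\cH(b)\subset H^2$ has square-summable coefficients, so $\sum_{k\le n}|a_k|\le \|f\|_{H^2}\sqrt{n+1}$. This gives $\|s_n(f)\|\le C\|f\|_{H^2}\sqrt{n+1}$. Then $\sum_n|\omega|^n|1-\omega|\sqrt{n+1}<\infty$, so the series in Step 1 converges absolutely in $\cH(b)$ to some $g\in M$.

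\emph{Step 3: identifying the limit.} Norm convergence in $\cH(b)$ implies local uniform convergence on $\DD$, since $\cH(b)$ is a holomorphic function space. The pointwise limit of the series is $f_\omega$, because the rearrangement in Step 1 is justified pointwise by absolute convergence of the double sum $\sum_n\sum_{k\le n}|\omega|^n|a_k||z|^k$. Therefore $g=f_\omega$. This shows both that $f_\omega\in\cH(b)$ and that $f_\omega\in M$.

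The one real obstacle is the growth bound on $\|s_n(f)\|$ in Step 2. That is exactly where the bounded monomials and the inclusion $\cH(b)\subset H^2$ are used, and the geometric decay of $\omega^n$ absorbs the $\sqrt{n}$ loss.
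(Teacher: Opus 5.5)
Your proof is correct and is essentially the paper's argument: the paper uses the same identity $f_\omega=\sum_k(1-\omega)\omega^k s_k(f)$, and gets absolute norm convergence from $\sup_n\|z^n\|_{\cH(b)}<\infty$, so that each $f_\omega$ lies in the closed span of the $s_n(f)$. The only difference is the growth bound: the paper uses just $\|s_k(f)\|_{\cH(b)}=O((1+\epsilon)^k)$, which comes from holomorphy alone, so your remark that $|a_k|\le C_r r^{-k}$ is ``too weak by itself'' is mistaken (take $r\in(|\omega|,1)$), although your $H^2$ bound $O(\sqrt{n+1})$ works equally well.
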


\begin{proof}
Both parts of the corollary will follow if we can show that,
for each $\omega\in\DD$,
the function $f_\omega$ belongs to the closed subspace spanned
by $\{s_n(f):n\ge0\}$. 

Fix $\omega\in\DD$. Then it is elementary
that, as functions on $\DD$,
\begin{equation}\label{E:fomega}
f_\omega=\sum_{k\ge0}(1-\omega)\omega^ks_k(f).
\end{equation}
Moreover, the right-hand side of \eqref{E:fomega} converges
absolutely in the norm of $\cH(b)$. Indeed, since $\sup_{n\ge0}\|z^n\|_{\cH(b)}<\infty$, it follows that $\|s_k(f)\|_{\cH(b)}=O((1+\epsilon)^k)$ as $k\to\infty$ for each $\epsilon>0$, and this implies that 
$\sum_{k\ge0}|1-\omega||\omega|^k\|s_k(f)\|_{\cH(b)}<\infty$.
Hence $f_\omega$ lies in the closed subspace spanned
by $\{s_n(f):n\ge0\}$, as desired.
\end{proof}

The rest of the paper is organized as follows.
The necessary  background on general Hilbert function spaces
and on de Branges--Rovnyak spaces
is presented in \S\ref{S:Hilbert} 
and  \S\ref{S:deBR} respectively.
In \S\ref{S:equiv}, we reduce Theorem~\ref{T:main} 
 to an apparently elementary problem about series.
The rest of the article is devoted to  solving 
this problem, thereby completing the proof of Theorem~\ref{T:main}.


\section{Hilbert holomorphic function spaces}\label{S:Hilbert}

Let $H$ be a Hilbert holomorphic function space on $\DD$. 
Recall that this means that $H$ contains the polynomials, and that
convergence in norm implies local uniform convergence in $\DD$.
It follows that, for each $k\ge0$, the map $f\mapsto \hat{f}(k):H\to\CC$ is a
continuous linear functional on $H$.
Therefore, for each $n\ge0$, the map 
$s_n:f\mapsto s_n(f):H\to H$ is a bounded linear projection. 
We denote its operator norm by $\|s_n\|$.
Also, given $f\in H$, we write $\supp\hat{f}:=\{k\ge0:\hat{f}(k)\ne0\}$.

\begin{theorem}\label{T:Hilbert}
Let $H$ be a Hilbert holomorphic function space on $\DD$.
Then the following are equivalent:
\begin{enumerate}[\normalfont(i)]
\item there exists $f\in H$ such that $f$ lies outside the closed linear span of $\{s_n(f):n\ge0\}$;
\item there exists $g\in H\setminus\{0\}$  such that  $\langle g,z^n\rangle_H=0$ for all $n\in\supp\hat{g}$.
\end{enumerate}
If there exists $g$ as in (ii), then $\supp\hat{g}$ must be infinite.
If, further, polynomials are dense in $H$, then $\ZZ^+\setminus \supp\hat{g}$ is also infinite.
\end{theorem}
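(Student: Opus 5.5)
The equivalence is a Hahn--Banach/orthogonal projection argument in the Hilbert space $H$, written in terms of Taylor coefficients. The key observation is that $\operatorname{span}\{s_n(f):n\ge0\}=\operatorname{span}\{\hat f(k)z^k:k\ge0\}=\operatorname{span}\{z^k:k\in\supp\hat f\}$. This holds because $s_n(f)-s_{n-1}(f)=\hat f(n)z^n$, so each family is obtained from the other by finite linear combinations. Hence the closed linear span of $\{s_n(f)\}$ equals $M_f:=\overline{\operatorname{span}}\{z^k:k\in\supp\hat f\}$.

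For (i)$\Rightarrow$(ii), suppose $f\notin M_f$. Let $g:=f-Pf$, where $P$ is the orthogonal projection onto $M_f$. Then $g\ne0$ and $g\perp z^k$ for all $k\in\supp\hat f$. Moreover $\supp\hat g\subseteq\supp\hat f$. Indeed, $Pf$ is a norm limit of polynomials supported in $\supp\hat f$, and coefficient functionals are continuous, so $\widehat{Pf}(k)=0$ for $k\notin\supp\hat f$. Hence $\langle g,z^n\rangle=0$ for all $n\in\supp\hat g$.

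For (ii)$\Rightarrow$(i), take $f:=g$. Then $g\perp M_g$ and $g\ne0$, so $g\notin M_g$, which is the closed span of $\{s_n(g)\}$.

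For the remaining claims:
\begin{itemize}
\item If $\supp\hat g$ were finite, $g$ would be a polynomial lying in $\operatorname{span}\{z^k:k\in\supp\hat g\}$. Being orthogonal to each of these monomials, it would satisfy $\langle g,g\rangle=0$, so $g=0$, a contradiction.
\item Suppose polynomials are dense and $F:=\ZZ^+\setminus\supp\hat g$ is finite. Choose polynomials $p_j\to g$ in $H$ and write $p_j=q_j+r_j$, with $q_j$ supported in $\supp\hat g$ and $r_j$ supported in $F$. Since $F$ is finite and coefficient functionals are continuous, $r_j=\sum_{k\in F}\hat p_j(k)z^k\to\sum_{k\in F}\hat g(k)z^k=0$. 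Hence $q_j\to g$, so $g\in M_g$. Combined with $g\perp M_g$, this forces $g=0$, a contradiction.
\end{itemize}

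The only mildly delicate point is the support inclusion $\supp\widehat{Pf}\subseteq\supp\hat f$ (and its analogue in the last bullet). Both follow from continuity of $h\mapsto\hat h(k)$, which holds because norm convergence implies local uniform convergence. The rest is routine.
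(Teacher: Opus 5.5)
Your proof is correct and follows essentially the same route as the paper. Both identify the span of the partial sums with $\operatorname{span}\{z^k:k\in\supp\hat f\}$, use orthogonality within the closed subspace of functions supported in $\supp\hat f$ for (i)$\Rightarrow$(ii), and handle the cofinite case by continuity of finitely many coefficient functionals. In that last step the paper corrects the approximating polynomial as $q:=p+s_N(g-p)$, whereas you subtract its $F$-part directly; these amount to the same argument.
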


\begin{proof}
(ii)$\Rightarrow$(i): Suppose that (ii) holds.
Then $\langle g,s_n(g)\rangle_H=0$ for all $n$, and hence
$\langle g,h\rangle_H=0$ for all $h$ in the closed linear span of $\{s_n(g):n\ge0\}$. 
Since $g\ne0$, we have $\langle g,g\rangle_H\ne0$, and so $g$ lies outside this closed linear span.
Thus (i) holds with $f=g$.

(i)$\Rightarrow$(ii): Suppose that (i) holds. Set $S:=\supp\hat{f}$ and
$M:=\{h\in H: \supp \hat{h}\subset S\}$.
Then $M$ is a closed subspace of $H$, hence a Hilbert space in its own right.
Obviously $f\in M$ and also $s_n(f)\in M$ for all $n\ge0$. Since $s_0(f)=\hat{f}(0)$ and $s_n(f)-s_{n-1}(f)=\hat{f}(n)z^n$ for all $n\ge1$,
it follows that the span of $\{s_n(f):n\ge0\}$ equals the span of $\{z^n:n\in S\}$.
As $f$ is assumed to lie outside the closed linear span of $\{s_n(f):n\ge0\}$,
it also lies outside the closed linear span of $\{z^n:n\in S\}$. Therefore 
there exists $g\in M\setminus\{0\}$ such that $g\perp z^n$ for all $n\in S$.
Thus (ii) holds.

Suppose, if possible, that (ii) holds with $\supp\hat{g}$ finite. Then
\[
\langle g,g\rangle_H
=\Bigl\langle \sum_{n\in \supp\hat{g}}\hat{g}(n)z^n,g\Bigr\rangle_H
=\sum_{n\in \supp\hat{g}}\hat{g}(n)\langle z^n, g\rangle_H=0,
\]
which is a contradiction. Therefore $\supp\hat{g}$ must be infinite.

Finally, assume that polynomials are dense in $H$, and suppose,
 if possible, that (ii) holds with $\ZZ^+\setminus \supp\hat{g}$ finite.
Fix  a positive integer $N$, chosen large enough so that 
$\ZZ^+\setminus \supp\hat{g}\subset\{0,1,\dots, N\}$. 
Let $\epsilon>0$. By density, there exists a polynomial $p$ such that $\|g-p\|_H<\epsilon$. 
Set $q:=p+s_N(g-p)$. Then $\hat{q}(n)=0$ for all $n\in \ZZ^+\setminus \supp\hat{g}$,
so $\langle q,g\rangle_H=0$. It follows that
\begin{align*}
|\langle g,g\rangle_H|
&=|\langle g-q,g\rangle_H|\\
&=|\langle g-p-s_N(g-p),\,g\rangle_H|\\
&\le (1+\|s_N\|)\|g-p\|_H\|g\|_H\\
&\le (1+\|s_N\|)\epsilon\|g\|_H.
\end{align*}
As $\epsilon>0$ is arbitrary, 
it follows that $\langle g,g\rangle_H=0$, a contradiction. 
We conclude that $\ZZ^+\setminus \supp\hat{g}$ must be infinite.
\end{proof}

\begin{remark}
It is not altogether obvious whether the situation described in Theorem~\ref{T:Hilbert}\,(ii) can ever arise. 
However, it was shown in \cite[Theorem~3.1]{MPR22a} that, for each set $S\subset\ZZ^+$ such that both $S$ and $\ZZ^+\setminus S$
are infinite, there exist a Hilbert holomorphic function space $H$ containing the polynomials as a dense subspace and a
function $g\in H\setminus\{0\}$ such that $\supp\hat{g}\subset S$ and $\langle g,z^n\rangle_H=0$ for all $n\in S$.
\end{remark}


\section{De Branges--Rovnyak spaces}\label{S:deBR}

The spaces now called de Branges--Rovnyak spaces 
were introduced by de Branges and Rovnyak 
in the appendix of \cite{dBR66a} 
and further studied in \cite{dBR66b}.
They were later popularized by the book of Sarason \cite{Sa94}.
It is now known that these spaces have a variety of connections 
with other topics in complex analysis and operator theory,
and they have attracted the interest of many authors.
In this section we present  a brief review of these spaces.
The interested reader may find much more information in the books
of Sarason \cite{Sa94} and Fricain--Mashreghi \cite{FM16}.

Let $b\in H^\infty$ with $\|b\|_{H^\infty}\le 1$. 
The map $(z,w)\mapsto (1-\overline{b(w)}b(z))/(1-\overline{w}z)$ is
positive definite on $\DD\times\DD$, so it is the reproducing kernel 
of a unique Hilbert  space of functions on $\DD$. This space is
called the \emph{de Branges--Rovnyak space} associated to $b$,
and is denoted by $\cH(b)$. 

Each $\cH(b)$-space is  contractively contained
in the Hardy space $H^2$. 
In particular, convergence in the norm of $\cH(b)$
implies local uniform convergence in $\DD$.
This is one of the two qualifying conditions to be 
a holomorphic function space on $\DD$.
The other, namely that $\cH(b)$ contain the polynomials, 
is only satisfied for certain $b$. This is the subject of the 
following result. We write $\TT$ for the unit circle.

\begin{theorem}[\protect{\cite[\S\S IV-2, IV-3, NIV-1, V-I]{Sa94}}]\label{T:non-extreme}
Let $b\in H^\infty$ with $\|b\|_{H^\infty}\le 1$.
Then the following statements are equivalent:
\begin{enumerate}[\normalfont(i)]
\item $\cH(b)$ contains the polynomials;
\item $b$ is a non-extreme point of the unit ball of $H^\infty$;
\item $\log(1-|b|^2)\in L^1(\TT)$.
\end{enumerate}
Moreover, if (i)--(iii) hold, then
the polynomials are dense in $\cH(b)$.
\end{theorem}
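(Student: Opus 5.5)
This result is quoted from Sarason's book \cite{Sa94}. If I had to prove it myself, I would organize everything around the outer function $a$ attached to a non-extreme $b$, and prove the cycle (ii)$\Leftrightarrow$(iii), (iii)$\Rightarrow$(i), (i)$\Rightarrow$(iii), followed by density.

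\emph{(ii)$\Leftrightarrow$(iii).} This is the de Leeuw--Rudin description of the extreme points of the unit ball of $H^\infty$. Suppose $\log(1-|b|^2)\in L^1(\TT)$. Let $a$ be the outer function with $|a|^2=1-|b|^2$ on $\TT$. Then $\|b\pm\tfrac12 a\|_{H^\infty}\le1$, so $b$ is not extreme. Conversely, suppose $b\pm h$ both lie in the ball with $h\ne0$. Then $|b|^2+|h|^2\le1$ a.e. on $\TT$, so $\log(1-|b|^2)\ge 2\log|h|$. Since $\log|h|\in L^1$ for $h\ne0$, this gives integrability.

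\emph{(iii)$\Rightarrow$(i).} I would use the standard membership criterion for non-extreme $b$: a function $f\in H^2$ lies in $\cH(b)$ if and only if $T_{\bar b}f=T_{\bar a}f^+$ for some $f^+\in H^2$. This comes from identifying $\cH(\bar b)$ with the range $M(T_{\bar a})$, using the operator identity $T_bT_{\bar b}+T_aT_{\bar a}=I$ and Douglas's range-inclusion lemma. Now take a polynomial $p$ of degree at most $n$. Then $T_{\bar b}p$ is again a polynomial of degree at most $n$. On the space $\mathcal P_n$ of such polynomials, $T_{\bar a}$ acts by an upper-triangular matrix with diagonal entries $\overline{a(0)}$, and $a(0)\ne0$ because $a$ is outer. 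So $T_{\bar a}$ maps $\mathcal P_n$ onto itself, and $p^+$ exists.

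\emph{(i)$\Rightarrow$(iii).} This is the step I expect to be the main obstacle, since extreme $b$ must be ruled out with no auxiliary function $a$ available. The route I would try first goes through Sarason's theorem that $\cH(b)$ is invariant under the shift $S$ if and only if $b$ is non-extreme. Since $\cH(b)$ always contains $k_0=1-\overline{b(0)}b$, containing the polynomials forces $b=(1-k_0)/\overline{b(0)}$ to lie in $\cH(b)$ whenever $b(0)\ne0$. For extreme $b$ one then derives a contradiction from the norm identity $\|S^*f\|_b^2=\|f\|_b^2-|f(0)|^2$ for $f\perp k_0$, together with the known fact that $b\in\cH(b)$ if and only if $b$ is non-extreme. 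The case $b(0)=0$ is reduced to the previous one by composing $b$ with a disk automorphism, which preserves extremality.

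\emph{Density.} I would first show that the kernels $k_w=(1-\overline{b(w)}b)/(1-\bar wz)$ span a dense subspace, which holds in any reproducing kernel Hilbert space. It then suffices to approximate each $k_w$ by polynomials in the $\cH(b)$-norm. In the non-extreme case $\cH(b)$ carries the norm $\|f\|_b^2=\|f\|_2^2+\|f^+\|_2^2$. Here $k_w^+=-\overline{b(w)}\,a/(1-\bar wz)$, up to conjugation conventions. The map $f\mapsto f^+$ commutes with $S^*$, and $b$ and $a$ lie in $H^2$. I would use this to check that $s_N(k_w)\to k_w$ with $s_N(k_w)^+\to k_w^+$ in $H^2$. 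Alternatively, one can argue that $f\perp$ all polynomials forces $f=0$, by expanding $\langle f,z^n\rangle_b$ and using the triangular structure of $T_{\bar a}$ on $\mathcal P_n$ again.
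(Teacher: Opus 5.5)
The paper gives no proof of this statement (it is quoted from Sarason), so your sketch has to stand on its own. Your argument for (iii)$\Rightarrow$(i) is sound apart from a slip noted below. The density part, however, has a genuine gap: nothing you cite gives $s_N(k_w)\to k_w$ in $\cH(b)$. The map $f\mapsto f^+$ commutes with $S^*$ but not with $S$, whereas $s_N=I-S^{N+1}S^{*(N+1)}$. Concretely, $(s_Nf)^+=(T_{\bar a}|_{\mathcal P_N})^{-1}T_{\bar b}\,s_Nf$ involves the inverse of $T_{\bar a}$ on $\mathcal P_N$. Its norm is that of the $N$-th truncation of the Toeplitz matrix of $1/a$, which is unbounded in $N$ whenever $1/a\notin H^\infty$. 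Convergence of Taylor partial sums in $\cH(b)$ is exactly the delicate issue of this paper, so it cannot be asserted without proof, even for kernels.

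Your alternative route is the one to use, but it needs a finishing step. The triangular computation gives $(z^n)^+=\sum_{i\le n}\overline{\hat\phi(n-i)}\,z^i$ with $\phi:=b/a$. Hence $\langle f,z^n\rangle_{\cH(b)}=\hat f(n)+\langle f^+,(z^n)^+\rangle_{H^2}$ is the $n$-th Taylor coefficient of $f+\phi f^+$, so $f$ is orthogonal to all polynomials iff $af+bf^+=0$. Since $|a|^2+|b|^2=1$ on $\TT$, this yields $\bar bf-\bar af^+=-f^+/a$ a.e. Then $f^+/a\in N^+\cap L^2(\TT)=H^2$ by Smirnov's theorem, and so $0=T_{\bar b}f-T_{\bar a}f^+=-f^+/a$. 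Hence $f^+=0$, and then $f=0$.

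In (i)$\Rightarrow$(iii), reducing the case $b(0)=0$ by a disk automorphism does not carry hypothesis (i) along. For $\tau(w)=(w-\lambda)/(1-\bar\lambda w)$, the Crofoot transform gives $\cH(\tau\circ b)=(1-|\lambda|^2)^{1/2}(1-\bar\lambda b)^{-1}\cH(b)$. So polynomials lie in $\cH(\tau\circ b)$ only if $bp\in\cH(b)$ for every polynomial $p$, which is essentially what you are trying to prove. Precomposition instead turns polynomials into rational functions not known to lie in $\cH(b)$. A correct patch is to write $b=z^m\tilde b$ with $\tilde b(0)\ne0$. Since $\cH(b)=\mathcal P_{m-1}\oplus z^m\cH(\tilde b)$, the hypothesis $z^mp\in\cH(b)$ forces $p\in\cH(\tilde b)$, and $|\tilde b|=|b|$ on $\TT$. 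A more direct route: $T_{\bar b}$ maps $\cH(b)$ into $\cH(\bar b)$ and $T_{\bar b}z^m=\overline{\hat b(m)}$, so $1\in\cH(\bar b)=M((I-T_{\bar b}T_b)^{1/2})$, which by Douglas's criterion and Szeg\H{o}'s infimum theorem is exactly (iii). Also, once you cite that $b\in\cH(b)$ forces $b$ to be non-extreme, the norm identity plays no role.

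Finally, two slips. First, $\|b\pm\frac12a\|_{H^\infty}\le1$ is false in general: for $b\equiv0.99$ one gets $b+\frac12a\approx1.06$. Take $h=\frac12a^2$ instead, which works because $|b|+\frac12(1-|b|^2)\le1$, or take the outer function of modulus $1-|b|$. Second, the identity behind $\cH(\bar b)=M(T_{\bar a})$ is $T_{\bar b}T_b+T_{\bar a}T_a=I$; the version $T_bT_{\bar b}+T_aT_{\bar a}=I$ already fails for $b=z/\sqrt2$, $a=1/\sqrt2$.
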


Using the equivalence between (ii) and (iii) in Theorem~\ref{T:non-extreme},
we see that every non-extreme $b$ has a unique \emph{Pythagorean mate},
namely an outer function $a$ on $\DD$ such that $a(0)>0$
and $|b|^2+|a|^2=1$ a.e.\ on~$\TT$. 
Conversely, if $b$ has a Pythagorean mate $a$,
then it is non-extreme.

The next result characterizes those $\cH(b)$ for which $\sup_{n\ge0}\|z^n\|_{\cH(b)}<\infty$.

\begin{theorem}[\protect{\cite[Theorem~1]{Sa86}}]\label{T:tame}
Let $b$ be a non-extreme point of the unit ball of $H^\infty$, let $a$ be its Pythagorean mate
and let $\phi:=b/a$. Then the following statements are equivalent.
\begin{enumerate}[\normalfont\rm(i)]
\item $\sup_{n\ge0}\|z^n\|_{\cH(b)}<\infty$;
\item $\phi\in H^2$;
\item $1/(1-|b|^2)\in L^1(\TT)$.
\end{enumerate}
\end{theorem}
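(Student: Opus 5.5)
The plan is to prove (ii)$\Leftrightarrow$(iii) by a boundary-value computation, and (i)$\Leftrightarrow$(ii) by computing $\|z^n\|_{\cH(b)}$ exactly in terms of the Taylor coefficients of $\phi$. The main tool is Sarason's description of the norm in $\cH(b)$ for non-extreme $b$ (from \cite{Sa94}). A function $f\in H^2$ lies in $\cH(b)$ if and only if there is $f^+\in H^2$ with $T_{\bar b}f=T_{\bar a}f^+$. This $f^+$ is unique, because $T_{\bar a}$ is injective when $a$ is outer. Moreover,
\[
\|f\|_{\cH(b)}^2=\|f\|_{H^2}^2+\|f^+\|_{H^2}^2 .
\]

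\textbf{(ii)$\Leftrightarrow$(iii).} Since $a$ is outer, $\phi=b/a$ belongs to the Smirnov class $N^+$. On $\TT$ we have
\[
|\phi|^2=\frac{|b|^2}{1-|b|^2}=\frac{1}{1-|b|^2}-1 \quad\text{a.e.}
\]
Smirnov's theorem says that a function in $N^+$ lies in $H^2$ if and only if its boundary values lie in $L^2(\TT)$. Hence $\phi\in H^2$ if and only if $1/(1-|b|^2)\in L^1(\TT)$.

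\textbf{(i)$\Leftrightarrow$(ii).} Since $\phi$ is holomorphic on $\DD$, it has Taylor coefficients $\hat\phi(k)$ whether or not it lies in $H^2$. For fixed $n$, I will set
\[
p_n(z):=\sum_{k=0}^n\overline{\hat\phi(n-k)}\,z^k
\]
and claim that $(z^n)^+=p_n$. It suffices to check that $\langle T_{\bar b}z^n,h\rangle=\langle T_{\bar a}p_n,h\rangle$ for all $h\in H^2$. Equivalently, we need $\langle z^n,bh\rangle_{H^2}=\langle p_n,ah\rangle_{H^2}$.
\begin{itemize}
\item The left side is $\overline{\widehat{bh}(n)}$.
\item The right side is the conjugate of $\sum_{k=0}^n\hat\phi(n-k)\widehat{ah}(k)$. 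This sum is the $n$th Taylor coefficient of the product $\phi\cdot(ah)$ of two holomorphic functions on $\DD$.
\end{itemize}
Since $\phi a=b$, the two sides agree. Consequently
\[
\|z^n\|_{\cH(b)}^2=1+\sum_{k=0}^n|\hat\phi(k)|^2 .
\]
This is bounded in $n$ exactly when $\sum_k|\hat\phi(k)|^2<\infty$, that is, when $\phi\in H^2$.

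The only step I expect to need care is the norm description of $\cH(b)$ together with the uniqueness of $f^+$. That is where the non-extreme hypothesis enters, through the fact that $a$ is outer. Everything else is the identity $\phi a=b$ read off at the level of Taylor coefficients, which requires no integrability of $\phi$.
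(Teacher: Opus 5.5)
Your proof is correct. The paper gives no argument for this statement; it simply imports it from Sarason \cite[Theorem~1]{Sa86}, so there is no in-paper proof to compare yours against.

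Every step of your route checks out:
\begin{enumerate}
\item Sarason's description $T_{\bar b}f=T_{\bar a}f^+$ with $\|f\|_{\cH(b)}^2=\|f\|_{H^2}^2+\|f^+\|_{H^2}^2$ applies, and $f^+$ is unique because $\ker T_{\bar a}=(aH^2)^\perp=\{0\}$ for outer $a$.
\item Your identification of $(z^n)^+$ is valid: testing against $h\in H^2$ and reading the identity $\phi\cdot(ah)=bh$ off coefficientwise gives exactly the required equality.
\item Smirnov's theorem correctly handles (ii)$\Leftrightarrow$(iii).
\end{enumerate}

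What your approach buys over the bare citation is the exact identity
\[
\|z^n\|_{\cH(b)}^2=1+\sum_{k=0}^n|\hat\phi(k)|^2 \qquad(n\ge0).
\]
This is also what Theorem~\ref{T:formula} yields for $f=z^n$. However, that theorem assumes $\phi\in H^2$ from the outset, so it cannot be used to prove (i)$\Rightarrow$(ii).

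Your computation avoids this circularity. It uses only the Taylor coefficients of $\phi=b/a$, which is holomorphic on $\DD$ regardless of any integrability, since the outer function $a$ has no zeros. It also shows directly that each $z^n$ lies in $\cH(b)$, which statement (i) tacitly presupposes.
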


The next result is a formula for  $\cH(b)$-norms in terms of Taylor coefficients.

\begin{theorem}[\protect{\cite[Theorem~3.1]{Ra26}}]\label{T:formula}
Let $b$ be a non-extreme point of the unit ball of $H^\infty$, let $a$ be its Pythagorean mate
and let $\phi:=b/a$. Assume that $\phi\in H^2$.
Then $f\in\cH(b)$ if and only if $f\in H^2$ and
\[
\sum_{k\ge0}\Bigl|\sum_{j\ge0}\overline{\hat{\phi}(j)}\hat{f}(j+k)\Bigr|^2<\infty,
\]
in which case
\[
\|f\|_{\cH(b)}^2=\sum_{k\ge0}|\hat{f}(k)|^2
+\sum_{k\ge0}\Bigl|\sum_{j\ge0}\overline{\hat{\phi}(j)}\hat{f}(j+k)\Bigr|^2.
\]
\end{theorem}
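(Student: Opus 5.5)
Throughout, $\hat{h}(k)$ denotes the $k$th Taylor coefficient and $\langle\cdot,\cdot\rangle$ the $H^2=L^2(\TT)$ pairing. The plan is to start from Sarason's description of $\cH(b)$ for non-extreme $b$ and then identify the auxiliary function explicitly using $\phi\in H^2$. Recall (\cite{Sa94}) that, for non-extreme $b$ with Pythagorean mate $a$, we have $f\in\cH(b)$ if and only if $f\in H^2$ and there exists $f^+\in H^2$ with $T_{\bar b}f=T_{\bar a}f^+$. In that case $f^+$ is unique, because $T_{\bar a}$ is injective when $a$ is outer, and
\[
\|f\|_{\cH(b)}^2=\|f\|_{H^2}^2+\|f^+\|_{H^2}^2 .
\]
Since $\bar b=\bar a\,\bar\phi$ on $\TT$, one expects $f^+=T_{\bar\phi}f=P_+(\bar\phi f)$. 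Its $k$th coefficient is
\[
c_k:=\int_\TT f\,\bar\phi\,\bar z^k\,dm=\sum_{j\ge0}\overline{\hat\phi(j)}\hat f(j+k),
\]
and this series converges absolutely by Cauchy--Schwarz, as $f,\phi\in H^2$. The theorem then reduces to two claims: (a) if $f\in\cH(b)$, then $\hat{f^+}(k)=c_k$ for all $k$; (b) if $f\in H^2$ and $(c_k)\in\ell^2$, then $g:=\sum_k c_kz^k$ satisfies $T_{\bar a}g=T_{\bar b}f$. Together these give both the membership criterion and the norm formula.

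The key tool for both claims is a family of bounded approximate inverses of $a$. Note that $1/|a|^2=1/(1-|b|^2)\in L^1$ by Theorem~\ref{T:tame}, although I mainly need that $\log|a|\in L^1$. For $n\ge1$ I let $h_n$ be the outer function with $|h_n|=\min(1/|a|,n)$ on $\TT$, and set $F_n:=ah_n$. Then $F_n$ is outer, $|F_n|\le1$, and $\log|F_n|\to0$ in $L^1$ by monotone convergence. Hence $F_n(0)\to1$, and
\[
\|F_n-1\|_{H^2}^2=\|F_n\|_2^2-2\,\mathrm{Re}\,F_n(0)+1\to0 .
\]
Passing to a subsequence, $F_n\to1$ a.e.\ on $\TT$ with $|F_n|\le1$.

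For (a), fix $k$ and pair with $z^kh_n\in H^\infty$:
\[
\langle f^+,z^kF_n\rangle=\langle T_{\bar a}f^+,z^kh_n\rangle=\langle T_{\bar b}f,z^kh_n\rangle=\int f\,\bar b\,\bar z^k\,\bar h_n\,dm=\int f\,\bar\phi\,\bar z^k\,\overline{F_n}\,dm .
\]
As $n\to\infty$, the left side tends to $\hat{f^+}(k)$ because $z^kF_n\to z^k$ in $H^2$. The right side tends to $c_k$ by dominated convergence, with dominating function $|f\phi|\in L^1$. Hence $\hat{f^+}(k)=c_k$, so $(c_k)\in\ell^2$ and the norm formula follows.

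For (b), it suffices to show $\langle g,ap\rangle=\langle f,bp\rangle$ for every polynomial $p$. Both sides are equal when $ap$ is replaced by a polynomial $q$ and $bp$ by $\phi q$: both equal $\sum_k c_k\overline{\hat q(k)}$, because $\phi q\in H^2$. I then take $q=\sigma_N(ap)$, the Fejér means of $ap$. These are uniformly bounded, converge to $ap$ in $H^2$, and converge a.e. Hence $\langle g,\sigma_N(ap)\rangle\to\langle g,ap\rangle$ since $g\in H^2$, while $\int f\bar\phi\,\overline{\sigma_N(ap)}\,dm\to\int f\bar b\bar p\,dm$ by dominated convergence with dominant $C|f\phi|$.

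The main obstacle is (a): identifying $f^+$ coefficientwise when $\phi$ is merely in $H^2$, so that $T_{\bar\phi}$ is unbounded and the formal factorization $T_{\bar b}=T_{\bar a}T_{\bar\phi}$ cannot be used directly. The bounded outer approximants $F_n=ah_n\to1$ are designed precisely to bypass this.
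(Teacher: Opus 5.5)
The paper gives no proof of this statement; it quotes it from \cite[Theorem~3.1]{Ra26}, so there is no internal argument to compare yours against. Your proof is correct and complete, given Sarason's description of $\cH(b)$ for non-extreme $b$ from \cite{Sa94}: $f\in\cH(b)$ iff $T_{\bar b}f=T_{\bar a}f^+$ for some $f^+\in H^2$, with $\|f\|_{\cH(b)}^2=\|f\|_{H^2}^2+\|f^+\|_{H^2}^2$, and $f^+$ unique because $T_{\bar a}$ is injective for outer $a$.

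In (a), three facts together give $F_n\to1$ in $H^2$:
\begin{itemize}
\item $|F_n|=\min(1,n|a|)\le1$;
\item $F_n(0)=\exp\int\log|F_n|\,dm\to1$, since $\log|a|\le\log|F_n|\uparrow0$;
\item $\|F_n\|_2^2=\int\min(1,n^2|a|^2)\,dm\to1$, which you use only implicitly.
\end{itemize}
The identity $bh_n=\phi F_n$ then turns the pairing into $\int f\bar\phi\bar z^k\overline{F_n}\,dm$, and dominated convergence with dominant $|f\phi|$ applies. In (b), the polynomial identity $\langle g,q\rangle=\langle f,\phi q\rangle$ is exact, since both sides equal $\sum_k c_k\overline{\hat q(k)}$. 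Passing to $q=\sigma_N(ap)$ works because $ap\in H^\infty$, so its Fej\'er means are analytic polynomials that are uniformly bounded and converge both in $H^2$ and a.e. That is exactly what is needed when $f\bar\phi$ is only in $L^1$.

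What your route buys is a self-contained argument from Sarason's $f^+$ theorem that shows precisely where $\phi\in H^2$ enters. It is used only through $f\bar\phi\in L^1$, which gives absolute convergence of the inner sums and licenses both dominated-convergence steps. This is consistent with the paper's remark that the formula can break down without that hypothesis.

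Two small remarks. First, the appeal to Theorem~\ref{T:tame} is unnecessary, and in fact never used: on $\TT$ one has $|\phi|^2=|a|^{-2}-1$, so $1/|a|^2\in L^1$ is equivalent to $\phi\in L^2$, and your argument needs only $\log|a|\in L^1$. Second, the passage to a subsequence is harmless, because the left-hand side in (a) converges along the full sequence.
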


Using the polarization identity, we deduce a formula for inner products in $\cH(b)$.

\begin{corollary}\label{C:formula}
Under the hypotheses of Theorem~\ref{T:formula},
if $f,g\in\cH(b)$, then
\[
\langle f,g\rangle_{\cH(b)}=\sum_{k\ge0}\hat{f}(k)\overline{\hat{g}(k)}
+\sum_{k\ge0}\Bigl(\sum_{i\ge0}\overline{\hat{\phi}(i)}\hat{f}(i+k)\Bigr)\Bigl(\sum_{j\ge0}{\hat{\phi}(j)}\overline{\hat{g}(j+k)}\Bigr).
\]
\end{corollary}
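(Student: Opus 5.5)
This is a polarization argument. We apply Theorem~\ref{T:formula} to the combinations $f+g$, $f-g$, $f+ig$ and $f-ig$, all of which lie in $\cH(b)$ because it is a vector space.

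For $h\in\cH(b)$ write
\[
L_k(h):=\sum_{j\ge0}\overline{\hat{\phi}(j)}\hat{h}(j+k),
\]
which is linear in $h$. Theorem~\ref{T:formula} says that $\|h\|_{\cH(b)}^2=\|\hat h\|_{\ell^2}^2+\|(L_k(h))_k\|_{\ell^2}^2$. So the norm of $\cH(b)$ is the norm coming from the positive semidefinite Hermitian form
\[
B(f,g):=\sum_k \hat f(k)\overline{\hat g(k)}+\sum_k L_k(f)\overline{L_k(g)}.
\]

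Several facts need checking here. Each inner series $L_k(f)$ converges, since $\phi\in H^2$ and $f\in H^2$, so Cauchy--Schwarz applies. For $f,g\in\cH(b)$ the sequences $(\hat f(k))$, $(\hat g(k))$, $(L_k(f))$ and $(L_k(g))$ are all in $\ell^2$ by Theorem~\ref{T:formula}, so both outer sums in $B$ converge absolutely. Finally, $B$ is linear in the first variable and conjugate-linear in the second.

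The polarization identity $4B(f,g)=\sum_{m=0}^{3}i^m B(f+i^mg,f+i^mg)$ holds for any sesquilinear form. Applying it to both $B$ and $\langle\cdot,\cdot\rangle_{\cH(b)}$, which agree on the diagonal, gives $\langle f,g\rangle_{\cH(b)}=B(f,g)$.

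It remains to note that $\overline{L_k(g)}=\sum_j\hat\phi(j)\overline{\hat g(j+k)}$, which matches the stated formula. The only mild point in the whole argument is the convergence bookkeeping above; there is no real obstacle.
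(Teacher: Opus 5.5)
Your proposal is correct and is essentially the paper's own argument: the paper obtains the corollary in one line by applying the polarization identity to the norm formula of Theorem~\ref{T:formula}. Your convergence checks (absolute convergence of the inner series via Cauchy--Schwarz, and of the outer sums via the $\ell^2$ membership given by that theorem) make explicit what the paper leaves implicit.
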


We conclude by remarking that Theorem~\ref{T:formula} and Corollary~\ref{C:formula} may break down without the assumption that $\phi\in H^2$.
This phenomenon is discussed in detail in \cite{Ra26}.


\section{Reduction to an elementary problem}\label{S:equiv}

The following result reduces the task of proving
Theorem~\ref{T:main} to that of solving an apparently
elementary problem about series.
In what follows,  
given a complex sequence $\alpha=(\alpha_j)_{j\ge0}$, 
we write
$\supp\alpha:=\{j\ge0:\alpha_j\ne0\}$.

\begin{theorem}\label{T:equiv}
The following statements are equivalent.
\begin{enumerate}[\normalfont(i)]
\item There exist a de Branges--Rovnyak space $\cH(b)$ and a
function $f\in\cH(b)$ satisfying the conclusions of
Theorem~\ref{T:main}.
\item There exist sequences $\alpha,\beta\in\ell^2(\ZZ^+,\CC)$ and $\gamma\in\ell^2(\ZZ^+,\RR)$, none of them identically zero,  such that
\begin{align}
\label{E:cond1}\sum_{j\ge k}\gamma_{j-k}\alpha_{j}&=\beta_k \quad(k\in\ZZ^+),\\ 
\label{E:cond2}\sum_{0\le k\le j}\gamma_{j-k}\beta_k&=-\alpha_j  \quad(j\in \supp\alpha).
\end{align}
\end{enumerate}
\end{theorem}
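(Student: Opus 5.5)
The plan is to pass between (i) and (ii) through Theorem~\ref{T:Hilbert}, with Theorems~\ref{T:tame}, \ref{T:formula} and Corollary~\ref{C:formula} converting the orthogonality condition into conditions on coefficients. The dictionary is $\gamma_j:=\hat\phi(j)$ (real, since $b$ has real coefficients and so $a$ and $\phi=b/a$ do too), $\alpha_j:=\hat g(j)$, and $\beta_k:=\sum_{j\ge k}\gamma_{j-k}\alpha_j$. With these choices \eqref{E:cond1} holds by definition.

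\emph{(i)$\Rightarrow$(ii).} Assume (i). Since $\sup_n\|z^n\|_{\cH(b)}<\infty$, Theorem~\ref{T:tame} gives $\phi\in H^2$, so $\gamma\in\ell^2(\ZZ^+,\RR)$. Theorem~\ref{T:Hilbert}, applied to $f$, produces $g\ne0$ with $\langle g,z^n\rangle=0$ for $n\in\supp\hat g$. Set $\alpha=\hat g\in\ell^2$; it is nonzero. By Theorem~\ref{T:formula}, $\beta\in\ell^2$. Corollary~\ref{C:formula} with $f=g$ and the second argument $z^n$ gives
\[
\langle g,z^n\rangle=\alpha_n+\sum_{k\le n}\beta_k\gamma_{n-k},
\]
because $\widehat{z^n}(j+k)=\delta_{j+k,n}$ and $\gamma$ is real. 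Setting this to zero for $n\in\supp\alpha$ is exactly \eqref{E:cond2}. We still need $\beta\ne0$ and $\gamma\ne0$. If $\beta=0$, then \eqref{E:cond2} forces $\alpha_j=0$ on $\supp\alpha$, which is absurd; this also shows $\gamma\ne0$.

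\emph{(ii)$\Rightarrow$(i).} This is the main obstacle: we must build $b$ from $\gamma$. Set $\phi(z):=\sum\gamma_jz^j\in H^2$, which is nonzero with real coefficients. We want $b$ non-extreme with $b/a=\phi$. The natural choice is to let $a$ be the outer function with $|a|^2=1/(1+|\phi|^2)$ on $\TT$ and $a(0)>0$, and to put $b:=a\phi$. The log-integrability needed for $a$ follows from $\log(1+|\phi|^2)\le|\phi|^2\in L^1$. Then $|b|^2=|\phi|^2/(1+|\phi|^2)\le1$ and $|a|^2+|b|^2=1$ on $\TT$.

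It remains to check the following points.
\begin{itemize}
\item $b\in H^\infty$. The function $b$ lies in the Smirnov class with bounded boundary values, so it is in $H^\infty$ with $\|b\|\le1$.
\item $a$ is the Pythagorean mate of $b$. By construction $a$ is outer, $a(0)>0$ and $|a|^2+|b|^2=1$ a.e., so by uniqueness it is the mate, and $b/a=\phi$.
\item Real coefficients. The symmetry $|\phi(\bar z)|=|\phi(z)|$ makes $a$ have real coefficients, hence so does $b$.
\end{itemize}
Theorem~\ref{T:non-extreme} now gives that the polynomials lie in $\cH(b)$ and are dense, and Theorem~\ref{T:tame} gives bounded monomials.

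Finally define $g(z):=\sum\alpha_jz^j\in H^2$. Condition \eqref{E:cond1} with $\beta\in\ell^2$ shows $g\in\cH(b)$ by Theorem~\ref{T:formula}. The inner-product computation above, together with \eqref{E:cond2}, gives $\langle g,z^n\rangle=0$ for $n\in\supp\hat g$, and $g\ne0$. Theorem~\ref{T:Hilbert} then yields the required $f$ (namely $f=g$).
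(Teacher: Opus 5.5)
Your proposal is correct and follows the paper's proof essentially step for step. It uses the same dictionary $\alpha=\hat g$, $\gamma=\hat\phi$, $\beta_k=\sum_j\gamma_j\alpha_{j+k}$, the same appeal to Theorem~\ref{T:Hilbert} and Corollary~\ref{C:formula} in both directions, and the same construction $b=a\phi$ with $|a|^2=1/(1+|\phi|^2)$. The only step left implicit is in (i)$\Rightarrow$(ii): you should first note that $b$ is non-extreme (by Theorem~\ref{T:non-extreme}, since $\cH(b)$ contains the polynomials), so that $a$ and $\phi=b/a$ exist and Theorem~\ref{T:tame} applies.
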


\begin{proof}[Proof that (i)$\Rightarrow$(ii).]\let\qed\relax
Suppose that $\cH(b)$ and $f$ exist as in (i). 
By  Theorem~\ref{T:non-extreme},
the fact that $\cH(b)$ contains the polynomials implies that $b$ is non-extreme, 
so has a Pythagorean mate~$a$.
By Theorem~\ref{T:tame}, the fact that
$\sup_{n\ge0}\|z^n\|_{\cH(b)}<\infty$ implies that $\phi:=b/a\in H^2$. 
Therefore Theorem~\ref{T:formula} and Corollary~\ref{C:formula} both hold. Also, since $b$ has real coefficients, the same is
true of $a$ and $\phi$.

Now we apply Theorem~\ref{T:Hilbert}:
since $f$ lies outside the closed linear span
of $\{s_n(f):n\ge0\}$,    there exists $g\in\cH(b)\setminus\{0\}$  such that 
\begin{equation}\label{E:conds}
\langle g,z^j\rangle_{\cH(b)}=0 \quad(j\in \supp\hat{g}).
\end{equation}
As $g\in\cH(b)$, Theorem~\ref{T:formula} implies that 
\[
\sum_{k\ge0}\Bigl|\sum_{j\ge0}\overline{\hat{\phi}(j)}\hat{g}(j+k)\Bigr|^2<\infty.
\]
For $k\in\ZZ^+$, set
\[
\alpha_k:=\hat{g}(k), 
\qquad
\beta_k:=\sum_{j\ge0}\overline{\hat{\phi}(j)}\hat{g}(j+k)
\quad\text{and}\quad
\gamma_k:=\hat{\phi}(k).
\]
Then $\alpha,\beta\in\ell^2(\ZZ^+,\CC)$ and $\gamma\in\ell^2(\ZZ^+,\RR)$. From the definition of $\beta$, we have
\[
\beta_k=\sum_{j\ge0}\gamma_j\alpha_{j+k} \quad(k\in\ZZ^+),
\] 
which is equivalent to \eqref{E:cond1}. Also,  by Corollary~\ref{C:formula}, for all $j\in\ZZ^+$ we have
\[
\langle g,z^j\rangle_{\cH(b)}=\hat{g}(j)+\sum_{0\le k\le j}\Bigl(\sum_{i\ge0}\overline{\hat{\phi}(i)}\hat{g}(i+k)\Bigr)\hat{\phi}(j-k)=\alpha_j+\sum_{0\le k\le j}\beta_k\gamma_{j-k}.
\]
Combining this with \eqref{E:conds}, we deduce that
\[
\alpha_j+\sum_{0\le k\le j}\beta_k\gamma_{j-k}=0 \quad(j\in\supp\alpha),
\]
which is equivalent to \eqref{E:cond2}.
Finally, since $g\ne0$, we have $\alpha\not\equiv0$, and from \eqref{E:cond2} it follows that
$\beta\not\equiv0$ and $\gamma\not\equiv0$ as well.
\end{proof}

\begin{proof}[Proof that (ii)$\Rightarrow$(i).] 
Suppose now that there exist $\alpha,\beta\in\ell^2(\ZZ^+,\CC)$ and $\gamma\in \ell^2(\ZZ^+,\RR)$ satisfying
\eqref{E:cond1} and \eqref{E:cond2},
none of them identically zero.
Define $\phi$ by
\[
\phi(z):=\sum_{k\ge0}\gamma_k z^k.
\]
Since $\gamma\in\ell^2(\ZZ^+)$,
we have $\phi\in H^2$. 
In particular $\log(1+|\phi|^2)\in L^1(\TT)$, 
so there exists a unique outer function $a$ on $\DD$ 
such that $a(0)>0$ and $|a|^2=1/(1+|\phi|^2)$ a.e.\ on $\TT$.
Set $b:=a\phi$. Then $b$ lies in the unit ball of $H^\infty$ and $a$ is its Pythagorean mate,
so $b$ is non-extreme. 
By Theorem~\ref{T:non-extreme}, the space $\cH(b)$ contains the polynomials as a dense subspace.
Further, we have $b/a=\phi\in H^2$, so by Theorem~\ref{T:tame} we have $\sup_{n\ge0}\|z^n\|_{\cH(b)}<\infty$.
Also, since $\gamma\in\ell^2(\ZZ^+,\RR)$,
the function $\phi$ has real Taylor coefficients. 
This implies that $|a(z)|=|a(\overline{z})|$ a.e.\ on $\TT$,
from which it follows that $a$ has real coefficients and hence $b$ too.

Now define
\[
g(z):=\sum_{k\ge0}\alpha_kz^k.
\]
Since $\alpha\in\ell^2(\ZZ^+)\setminus\{0\}$, we have $g\in H^2\setminus\{0\}$.
Also, using \eqref{E:cond1}, we have
\[
\sum_{k\ge0}\Bigl|\sum_{j\ge0}\overline{\hat{\phi}(j)}\hat{g}(j+k)\Bigr|^2
=\sum_{k\ge0}\Bigl|\sum_{j\ge0}\gamma_j\alpha_{j+k}\Bigr|^2
=\sum_{k\ge0}|\beta_k|^2<\infty,
\]
so by Theorem~\ref{T:formula} we deduce that $g\in\cH(b)\setminus\{0\}$. 
From Corollary~\ref{C:formula}, we again have
\[
\langle g,z^j\rangle_{\cH(b)}=\alpha_j+\sum_{0\le k\le j}\beta_k\gamma_{j-k} \quad(j\in\ZZ^+),
\] 
whence, by \eqref{E:cond2}, 
\[
\langle g,z^j\rangle_{\cH(b)}=0 \quad(j\in\supp\hat{g}).
\]
By Theorem~\ref{T:Hilbert}, it follows that there exists $f\in\cH(b)$ lying outside 
the closed linear span of $\{s_n(f):n\ge0\}$ 
(and in fact the proof of Theorem~\ref{T:Hilbert} shows that we may take $f=g$).
This completes the proof.
\end{proof}

We are thus led to consider the following problem.

\begin{problem}\label{Pb:Anal1}
Show that there exist sequences $\alpha,\beta\in\ell^2(\ZZ^+,\CC)$ and $\gamma\in\ell^2(\ZZ^+,\RR)$, none of them identically zero, such that
\eqref{E:cond1} and \eqref{E:cond2} hold.
\end{problem}

Despite its elementary appearance, this problem seems 
to be quite difficult to solve.  
It is perhaps worthwhile to reflect a moment on why this should be.
The following proposition imposes a number of constraints
on possible solutions $\alpha,\beta,\gamma$,
that explain why finding them might 
be a delicate matter.

\begin{proposition}\label{P:delicate}
Suppose that $\alpha,\beta\in\ell^2(\ZZ^+,\CC)\setminus\{0\}$
and $\gamma\in\ell^2(\ZZ^+,\RR)\setminus\{0\}$
satisfy \eqref{E:cond1} and \eqref{E:cond2}.
Then:
\begin{enumerate}[\normalfont(i)]
\item $\displaystyle\sum_{k\ge0}\sum_{j\ge k}\overline{\alpha}_j\beta_k\gamma_{j-k}>0$
but $\displaystyle\sum_{j\ge0}\sum_{k\le j}\overline{\alpha}_j\beta_k\gamma_{j-k}<0$.
\item $\displaystyle\sum_{j\ge0}\sum_{0\le k\le j}|\alpha_j||\beta_k||\gamma_{j-k}|=\infty$.
\item $\displaystyle\sum_{k\ge0}\Bigl(\sum_{j\ge k}|\alpha_{j}||\gamma_{j-k}|\Bigr)^2=\infty$ 
and $\displaystyle\sum_{j\ge0}\Bigl(\sum_{0\le k\le j}|\beta_{k}||\gamma_{j-k}|\Bigr)^2=\infty$.
\item $\displaystyle\sum_{n\ge0}|\alpha_n|=\sum_{n\ge0}|\beta_n|=\sum_{n\ge0}|\gamma_n|=\infty$.
\item $\displaystyle\sum_{n\ge0}(n+1)|\alpha_n|^2=\infty$.
\item Both $\supp\alpha$ and $\displaystyle (\ZZ^+\setminus \supp\alpha)$ are infinite sets.
\end{enumerate}
\end{proposition}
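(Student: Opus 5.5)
The plan is to derive (i) from the two defining relations, and then to get (ii)--(v) as consequences of (i): each of them would allow Fubini to interchange the two double sums in (i), forcing them to be equal, which (i) forbids. Item (vi) will come straight from Theorem~\ref{T:Hilbert}.

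For the first sum in (i), fix $k$ and use \eqref{E:cond1}. The inner sum $\sum_{j\ge k}\overline{\alpha}_j\gamma_{j-k}$ equals $\overline{\beta_k}$, because $\gamma$ is real. Hence the first iterated sum is $\sum_k\overline{\beta_k}\beta_k=\|\beta\|^2$, which is positive since $\beta\ne0$; it converges because $\beta\in\ell^2$.

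For the second sum, the inner sum over $k\le j$ is finite. By \eqref{E:cond2}, for $j\in\supp\alpha$ we have $\sum_{k\le j}\beta_k\gamma_{j-k}=-\alpha_j$. For $j\notin\supp\alpha$ the factor $\overline{\alpha}_j$ vanishes. So the second iterated sum is $-\sum_j|\alpha_j|^2=-\|\alpha\|^2<0$.

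For (ii), suppose the absolute triple sum were finite. Fubini would then make the two iterated sums in (i) equal, contradicting (i).

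For (iii), suppose the first expression were finite. Let $A_k:=\sum_{j\ge k}|\alpha_j||\gamma_{j-k}|$, so that $(A_k)\in\ell^2$. By Cauchy--Schwarz, $\sum_k|\beta_k|A_k\le\|\beta\|\,\|A\|<\infty$, and this quantity is exactly the absolute sum in (ii), contradicting (ii). The second expression is handled symmetrically: with $B_j:=\sum_{k\le j}|\beta_k||\gamma_{j-k}|$, we bound $\sum_j|\alpha_j|B_j\le\|\alpha\|\,\|B\|$.

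For (iv), use Young's inequality $\ell^1*\ell^2\subset\ell^2$. The expressions in (iii) are $\|\,|\alpha|\star|\gamma|\,\|_2^2$ (a correlation) and $\|\,|\beta|*|\gamma|\,\|_2^2$ (a convolution). Each is bounded by $\|\gamma\|_1^2\|\alpha\|_2^2$ or $\|\gamma\|_1^2\|\beta\|_2^2$. So if $\gamma\in\ell^1$, (iii) would fail. If $\alpha\in\ell^1$, then $\sum_k(\sum_j|\alpha_j||\gamma_{j-k}|)^2\le\|\alpha\|_1^2\|\gamma\|_2^2$, again contradicting (iii). The case $\beta\in\ell^1$ is symmetric, using the second expression.

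Item (v) is the step I expect to be the main obstacle, since it needs a sharper estimate than Young's inequality. Here we bound the sum in (ii) directly. Let $c_j:=\sum_{k\le j}|\beta_k||\gamma_{j-k}|$. By Cauchy--Schwarz over $0\le k\le j$, we get $c_j\le\|\beta\|\,\|\gamma\|$ uniformly in $j$. Then
\[
\sum_j|\alpha_j|c_j\le\|\beta\|\,\|\gamma\|\sum_j|\alpha_j|,
\]
which only recovers (iv). So I would instead try a weighted Cauchy--Schwarz and the Hilbert-matrix bound. Write the sum in (ii) as $\sum_{j,k}|\alpha_{j+k}||\gamma_j||\beta_k|$, indexing by $j$ for $\gamma$ and $k$ for $\beta$. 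Then
\[
\sum_{j,k}|\alpha_{j+k}||\gamma_j||\beta_k|\le\Bigl(\sum_{j,k}|\alpha_{j+k}|^2\Bigr)^{1/2}\Bigl(\sum_{j,k}|\gamma_j|^2|\beta_k|^2\Bigr)^{1/2}=\Bigl(\sum_n(n+1)|\alpha_n|^2\Bigr)^{1/2}\|\gamma\|\,\|\beta\|.
\]
This works directly. So if $\sum_n(n+1)|\alpha_n|^2<\infty$, the sum in (ii) is finite, a contradiction.

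For (vi), the sequence $\alpha$ is $\hat g$ for the function $g$ built in the proof of Theorem~\ref{T:equiv}, which satisfies Theorem~\ref{T:Hilbert}(ii). The $\cH(b)$ constructed there has the polynomials as a dense subspace. The last part of Theorem~\ref{T:Hilbert} then gives that both $\supp\alpha$ and its complement are infinite.
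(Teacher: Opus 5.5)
Your proof is correct and follows essentially the same route as the paper: (i) comes directly from \eqref{E:cond1} and \eqref{E:cond2}, (ii) from Fubini, (iii)--(v) from Cauchy--Schwarz-type estimates contradicting (ii) or (iii), and (vi) from the last part of Theorem~\ref{T:Hilbert} applied to the function $g$ in the proof of Theorem~\ref{T:equiv}. Your Young's-inequality bound in (iv) and your single Cauchy--Schwarz on $\sum_{j,k}|\alpha_{j+k}||\gamma_j||\beta_k|$ in (v) just repackage the paper's estimates, and you can delete the abandoned first attempt at (v) and the unused mention of the Hilbert matrix.
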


\begin{proof}
(i)  Multiplying  \eqref{E:cond1} by $\overline{\beta}_k$, conjugating, and summing over $k$, we obtain
\[
\sum_{k\ge0}\sum_{j\ge k}\overline{\alpha}_j\beta_k\gamma_{j-k}=\sum_{k\ge0}|\beta_k|^2>0.
\]
On the other hand, multiplying \eqref{E:cond2} by $\overline{\alpha}_j$ and summing over $j$, we get
\[
\sum_{j\ge0}\sum_{k\le j}\overline{\alpha}_j\beta_k\gamma_{j-k}=-\sum_{j\ge0}|\alpha_j|^2<0.
\]

(ii) The two double series in (i) sum to different values.
This can only happen if
\begin{equation}\label{E:eq1}
\sum_{j\ge0}\sum_{0\le k\le j}|\alpha_j||\beta_k||\gamma_{j-k}|=\infty.
\end{equation}

(iii)
Applying Cauchy--Schwarz to the outer sum in \eqref{E:eq1}, we get
\[
\Bigl(\sum_{j\ge0}\sum_{0\le k\le j}|\alpha_j||\beta_k||\gamma_{j-k}|\Bigr)^2
\le \Bigl(\sum_{j\ge0}|\alpha_j|^2\Bigr)\Bigl(\sum_{j\ge0}\Bigl(\sum_{0\le k\le j}|\beta_k||\gamma_{j-k}|\Bigr)^2\Bigr).
\]
As the left-hand side is infinite and $(\alpha_j)\in\ell^2(\ZZ^+)$, it follows that
\[
\sum_{j\ge0}\Bigl(\sum_{0\le k\le j}|\beta_k||\gamma_{j-k}|\Bigr)^2=\infty.
\]
Likewise, by Cauchy--Schwarz again,
\[
\Bigl(\sum_{k\ge0}\sum_{j\ge k}|\alpha_j||\beta_k||\gamma_{j-k}|\Bigr)^2
\le \Bigl(\sum_{k\ge0}|\beta_k|^2\Bigr)\Bigl(\sum_{k\ge0}\Bigl(\sum_{j\ge k}|\alpha_j||\gamma_{j-k}|\Bigr)^2\Bigr).
\]
As the left-hand side is infinite and $(\beta_k)\in\ell^2(\ZZ^+)$, it follows that
\begin{equation}\label{E:eq2}
\sum_{k\ge0}\Bigl(\sum_{j\ge k}|\alpha_j||\gamma_{j-k}|\Bigr)^2=\infty.
\end{equation}

(iv) Applying Cauchy--Schwarz directly to the double sum in \eqref{E:eq1}, we get
\begin{align*}
\Bigl(\sum_{j\ge0}\sum_{0\le k\le j}|\alpha_j||\beta_k||\gamma_{j-k}|\Bigr)^2
&\le\Bigl(\sum_{j\ge0}\sum_{0\le k\le j}|\alpha_j|^2|\gamma_{j-k}|\Bigr)\Bigl(\sum_{j\ge0}\sum_{0\le k\le j}|\beta_k|^2|\gamma_{j-k}|\Bigr)\\
&\le \Bigl(\sum_{n\ge0}|\gamma_n|\Bigr)^2\Bigl(\sum_{j\ge0}|\alpha_j|^2\Bigr)\Bigl(\sum_{k\ge0}|\beta_k|^2\Bigr).
\end{align*}
As the left-hand side is infinite and both $(\alpha_j),(\beta_k)\in\ell^2(\ZZ^+)$, it follows that
\[
\sum_{n\ge0}|\gamma_n|=\infty.
\]
The proofs that $\sum_{n\ge0}|\alpha_n|=\infty$ and $\sum_{n\ge0}|\beta_n|=\infty$ are similar.

(v)  The relation \eqref{E:eq2} can be rewritten as
\[
\sum_{k\ge0}\Bigl(\sum_{j\ge0}|\alpha_{j+k}||\gamma_j|\Bigr)^2=\infty.
\]
Applying Cauchy--Schwarz to the inner sum, we get
\[
\sum_{k\ge0}\Bigl(\sum_{j\ge0}|\alpha_{j+k}||\gamma_j|\Bigr)^2
\le
\sum_{k\ge0}\Bigl(\sum_{j\ge0}|\alpha_{j+k}|^2\Bigr)\Bigl(\sum_{j\ge0}|\gamma_j|^2\Bigr).
\]
As the left-hand side is infinite and  $(\gamma_j)\in\ell^2(\ZZ^+)$,
it follows that
\[
\sum_{k\ge0}\sum_{j\ge0}|\alpha_{j+k}|^2=\infty,
\]
in other words, that
\[
\sum_{n\ge0}(n+1)|\alpha_{n}|^2=\infty.
\]

(vi) Since $\sum_{n\ge0}|\alpha_n|=\infty$, it is obvious that $\supp\alpha$ is infinite.
To prove that $\ZZ^+\setminus\supp\alpha$ is infinite,
we note that the proof of Theorem~\ref{T:equiv} shows that
$\supp\alpha=\supp\hat{g}$, where 
 $g$ is a function in a de Branges--Rovnyak space $\cH(b)$ 
in which polynomials are dense, and such that
 $\langle g,z^j\rangle_{\cH(b)}=0$ for all $j\in \supp\hat{g}$.
The result therefore follows from the last part of  Theorem~\ref{T:Hilbert}.   
\end{proof}


\section{Strategy for the solution of Problem~\ref{Pb:Anal1}}\label{S:strategy}

The rest of the paper is taken up with the solution
of Problem~\ref{Pb:Anal1}.
In this section, we formulate the overall strategy
for finding the solution.

Let $\gamma\in\ell^2(\ZZ^+,\RR)$ and let $R\subset\ZZ^+$.
Associated to $(\gamma,R)$, we define the unbounded
operator
$\Gamma:c_{00}(\ZZ^+)\subset\ell^2(\ZZ^+)\to\ell^2(R)$ by
\[
(\Gamma x)_r:=\sum_{k\le r}\gamma_{r-k}x_k
\quad(x\in c_{00}(\ZZ^+),\,r\in R),
\]
and its formal transpose 
$\Gamma^t:c_{00}(R)\subset\ell^2(R)\to\ell^2(\ZZ^+)$ by
\[
(\Gamma^t y)_k:=\sum_{\substack{r\in R\\r\ge k}}\gamma_{r-k}y_r
\quad(y\in c_{00}(R),\, k\in \ZZ^+).
\]
Also, on $\cH:=\ell^2(\ZZ^+)\oplus \ell^2(R)$, we define the unbounded operator $D_\Gamma:c_{00}(\ZZ^+)\oplus c_{00}(R)\subset\cH\to \cH$ by
\[
D_\Gamma(x,y):=(\Gamma^t y,\Gamma x)\quad(x\in c_{00}(\ZZ^+),\, y\in c_{00}(R)).
\]
As $D_\Gamma$ is densely defined, it has an adjoint $D_\Gamma^*$.

The following lemma transforms the question of solving
Problem~\ref{Pb:Anal1} into one of showing that
$\ker(D_\Gamma^*-i I)\ne\{0\}$.

\begin{lemma}\label{L:ker}
If
$\ker(D_\Gamma^*-i I)\ne\{0\}$,
then there exist non-zero $\alpha,\beta\in\ell^2(\ZZ^+,\CC)$
with $\supp\alpha\subset R$ such that \eqref{E:cond1} and \eqref{E:cond2} both hold.
\end{lemma}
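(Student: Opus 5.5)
The plan is to compute $D_\Gamma^*$ explicitly by testing against the standard basis vectors, and then read off $\alpha$ and $\beta$ directly from a non-zero element of $\ker(D_\Gamma^*-iI)$. Fix $(u,v)\in\cH=\ell^2(\ZZ^+)\oplus\ell^2(R)$, non-zero, with $D_\Gamma^*(u,v)=i(u,v)$. By definition of the adjoint (inner products linear in the first slot),
\[
\langle D_\Gamma(x,y),(u,v)\rangle_\cH=\langle (x,y),i(u,v)\rangle_\cH
\quad\bigl(x\in c_{00}(\ZZ^+),\ y\in c_{00}(R)\bigr).
\]
This pairing is well defined: for a basis vector $e_k$, the vector $\Gamma e_k=(\gamma_{r-k})_{r\in R,\,r\ge k}$ lies in $\ell^2(R)$. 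Likewise $\Gamma^t e_r=(\gamma_{r-k})_{k\le r}$ is finitely supported. So every sum below converges, by Cauchy--Schwarz or because it is finite.

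First I take $y=0$ and $x=e_k$. Using that $\gamma$ is real, and conjugating, this gives
\[
\sum_{\substack{r\in R\\ r\ge k}}\gamma_{r-k}v_r=i\,u_k\quad(k\in\ZZ^+).
\]
Next I take $x=0$ and $y=e_r$ with $r\in R$, which similarly gives
\[
\sum_{k\le r}\gamma_{r-k}u_k=i\,v_r\quad(r\in R).
\]

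Now I define $\alpha_j:=v_j$ for $j\in R$ and $\alpha_j:=0$ otherwise, and $\beta_k:=i\,u_k$. Then $\alpha,\beta\in\ell^2(\ZZ^+,\CC)$ and $\supp\alpha\subset R$. The first identity says exactly that $\sum_{j\ge k}\gamma_{j-k}\alpha_j=\beta_k$ for all $k$, which is \eqref{E:cond1}. Multiplying the second identity by $i$ gives $\sum_{k\le r}\gamma_{r-k}\beta_k=-v_r=-\alpha_r$ for all $r\in R$. In particular this holds for all $r\in\supp\alpha$, which is \eqref{E:cond2}.

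It remains to check non-triviality, which is the only point needing a little care. If $v=0$, then the first identity forces $u=0$. If $u=0$, then the second identity forces $v=0$. Since $(u,v)\ne0$, both $u$ and $v$ are non-zero, and hence both $\beta$ and $\alpha$ are non-zero. There is no real obstacle here; the main thing to get right is the bookkeeping of conjugations, where the reality of $\gamma$ is used.
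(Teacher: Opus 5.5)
Your proposal is correct and essentially matches the paper's proof. You test the eigenvalue equation for $D_\Gamma^*$ against $(e_k,0)$ and $(0,e_r)$, using the reality of $\gamma$, to get the two coordinate identities, then take $\alpha$ to be $v$ extended by zero off $R$ and $\beta:=iu$, with non-triviality following because either of $u,v$ vanishing forces the other to vanish; your remark that $\Gamma e_k\in\ell^2(R)$ and $\Gamma^t e_r\in c_{00}(\ZZ^+)$ just makes explicit a well-definedness point the paper leaves implicit.
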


\begin{proof}
Let $(u,v)\in\ell^2(\ZZ^+)\oplus\ell^2(R)$ 
be a non-zero element of $\ker(D_\Gamma^*-iI)$.
Then, for all $(x,y)\in c_{00}(\ZZ^+)\oplus c_{00}(R)$, we have
\begin{align*}
\Bigl\langle i(u,v),\,(x,y)\Bigr\rangle_{\cH}
&=\Bigl\langle D_\Gamma^*(u,v),\,(x,y)\Bigr\rangle_{\cH}\\
&=\Bigl\langle (u,v),\,D_\Gamma(x,y)\Bigr\rangle_{\cH}\\
&=\Bigl\langle (u,v),\,(\Gamma^t y,\Gamma x)\Bigr\rangle_{\cH}.
\end{align*}
In particular, taking $x=e_k~(k\in\ZZ^+)$ and $y=0$, we obtain
\begin{equation}\label{E:iuk}
iu_k=\sum_{\substack{r\in R\\r\ge k}}\gamma_{r-k}v_r \quad(k\in\ZZ^+),
\end{equation}
and, taking $x=0$ and $y=e_r~(r\in R)$, we obtain
\begin{equation}\label{E:ivr}
iv_r=\sum_{\substack{k\in\ZZ^+\\k\le r}}\gamma_{r-k}u_k \quad(r\in R).
\end{equation}
In particular, both $u$ and $v$ are non-zero, since either
one being zero forces the other to be zero.

Extend $v$ to $\ZZ^+$ by setting  it equal to zero outside $R$,
and call the resulting sequence $\alpha$. Also, set $\beta:=iu$.
Then both $\alpha,\beta\in\ell^2(\ZZ^+,\CC)$, neither is identically zero, 
$\supp\alpha\subset R$, and the relations \eqref{E:iuk}
and \eqref{E:ivr} imply \eqref{E:cond1} and \eqref{E:cond2} respectively.
\end{proof}

So our problem now becomes one of constructing 
$\gamma$ and $R$ so that the resulting operator $D_\Gamma$
satisfies $\ker(D_\Gamma^*-iI)\ne\{0\}$. 
This seems to be  difficult to do directly, so instead we approach 
it indirectly, using a perturbation argument.

Suppose that we can write $\Gamma=\Delta+\cE$,
where $\Delta,\cE:c_{00}(\ZZ^+)\subset\ell^2(\ZZ^+)\to \ell^2(R)$ are operators with real matrices such that
$\cE$ is bounded. 
Let $\Delta^t$ be the formal transpose of $\Delta$, and let $D_\Delta$ be the densely defined operator given by
\[
D_\Delta(x,y):=(\Delta^t y,\Delta x)\quad(x\in c_{00}(\ZZ^+),\, y\in c_{00}(R)).
\]

\begin{lemma}\label{L:perturb}
If $\ker(D_\Delta^*-iI)\ne\{0\}$,
then $\ker(D_\Gamma^*-iI)\ne\{0\}$.
\end{lemma}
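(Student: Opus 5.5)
The plan is to view $D_\Gamma$ as a bounded symmetric perturbation of $D_\Delta$ and to use the fact that deficiency indices of a symmetric operator do not change under such a perturbation.

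\textbf{Set-up.} Define $B(x,y):=(\cE^t y,\cE x)$ on $\cH$. Since $\cE$ is bounded with a real matrix, its transpose $\cE^t$ coincides with the Hilbert-space adjoint $\cE^*$, which is bounded. Hence $B$ extends to a bounded, everywhere defined, self-adjoint operator on $\cH$. On the common domain $c_{00}(\ZZ^+)\oplus c_{00}(R)$ we have $D_\Gamma=D_\Delta+B$.

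Next, $D_\Delta$ is symmetric. For finitely supported $(x,y)$ and $(x',y')$, the realness of the matrix of $\Delta$ gives
\[
\langle \Delta^t y,x'\rangle=\langle y,\Delta x'\rangle
\quad\text{and}\quad
\langle \Delta x,y'\rangle=\langle x,\Delta^t y'\rangle ,
\]
so $\langle D_\Delta(x,y),(x',y')\rangle=\langle (x,y),D_\Delta(x',y')\rangle$. The same computation applies to $D_\Gamma$.

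\textbf{Reduction to closures.} Let $T:=\overline{D_\Delta}$, which is closed and symmetric. Because $B$ is bounded and everywhere defined, $\overline{D_\Gamma}=T+B$ on $\mathrm{dom}\,T$, and $(D_\Gamma)^*=D_\Delta^*+B$. Since $D^*=\overline{D}^*$ for any densely defined closable $D$, the statement becomes
\[
\dim\ker((T+B)^*-i)=\dim\ker(T^*-i)=\dim\operatorname{ran}(T-i)^\perp .
\]
It therefore suffices to show that the codimension of the closed subspace $\operatorname{ran}(T_t-i)$, where $T_t:=T+tB$, is independent of $t\in[0,1]$.

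\textbf{Homotopy step.} For each $t$, $T_t$ is closed and symmetric on $\mathrm{dom}\,T$, so
\[
\|(T_t-i)u\|\ge\|u\|.
\]
Thus $T_t-i$ is injective with closed range. Equip $\mathrm{dom}\,T$ with the graph norm of $T$. This norm is equivalent to the graph norm of each $T_t$, since $B$ is bounded. Each $T_t-i$ is then a bounded, injective operator with closed range from this Banach space into $\cH$, that is, a semi-Fredholm operator of index $-\dim\operatorname{ran}(T_t-i)^\perp$ (possibly $-\infty$). The map $t\mapsto T_t-i$ is norm-continuous, with $\|(T_t-i)-(T_s-i)\|\le|t-s|\|B\|$. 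By Kato's stability theorem for semi-Fredholm operators (injectivity is preserved, as is the lower bound $\|(T_s-i)u\|\ge\|u\|$), the index is locally constant. Hence it is constant on $[0,1]$.

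To stay self-contained, I would instead use the elementary version of this argument. If $|t-s|\,\|B\|<1$, let $P_t$ be the orthogonal projection onto $\operatorname{ran}(T_t-i)^\perp$. Using $\|(T_t-i)u\|\ge\|u\|$, one shows that $P_s$ restricted to $\operatorname{ran}(T_t-i)^\perp$ is injective, and symmetrically with $s$ and $t$ exchanged. This gives equal dimensions, including the infinite case. Covering $[0,1]$ by finitely many such intervals yields $\dim\ker(D_\Gamma^*-i)=\dim\ker(D_\Delta^*-i)$, and in particular one is nonzero iff the other is.

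The main obstacle is this injectivity estimate. Take $w\perp\operatorname{ran}(T_t-i)$ with $P_sw=0$, so that $w=(T_s-i)u$ for some $u$. Then
\[
0=\langle w,(T_t-i)u\rangle=\|w\|^2+(t-s)\langle w,Bu\rangle ,
\]
and since $\|u\|\le\|w\|$, we get $\|w\|^2\le|t-s|\,\|B\|\,\|w\|^2$. This forces $w=0$. Verifying this inequality carefully, together with the domain bookkeeping for the closures, is where I expect the real work to lie.
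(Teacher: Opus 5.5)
Your argument is correct apart from one sign slip, and it takes a different route from the paper.

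\textbf{The sign slip.} You wrote $\dim\ker(T^*-i)=\dim\operatorname{ran}(T-i)^\perp$, but in fact $\operatorname{ran}(T-i)^\perp=\ker(T^*+i)$. Indeed, if $w\perp(T-i)u$ for all $u\in\operatorname{dom}T$, then $\langle w,Tu\rangle=\langle -iw,u\rangle$, so $T^*w=-iw$. As written, your homotopy therefore proves $\dim\ker(D_\Gamma^*+iI)=\dim\ker(D_\Delta^*+iI)$. The repair is immediate. Either run the same argument with $T_t+i$ in place of $T_t-i$, since the bound $\|(T_t+i)u\|\ge\|u\|$ and your injectivity estimate go through verbatim. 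Or use the realness of $\gamma$ to interchange the two deficiency spaces by complex conjugation. The rest checks out: $B$ is bounded self-adjoint because $\cE$ is real, $\overline{D_\Gamma}=T+B$ and $D_\Gamma^*=D_\Delta^*+B$, and the closed-range and projection estimates are correct.

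\textbf{Comparison with the paper.} The paper argues qualitatively, using two cited textbook facts:
\begin{enumerate}[(a)]
\item a symmetric operator is essentially self-adjoint iff both deficiency spaces vanish;
\item essential self-adjointness is preserved under bounded self-adjoint perturbations.
\end{enumerate}
From these, $D_\Gamma$ fails to be essentially self-adjoint, so at least one of $\ker(D_\Gamma^*\mp iI)$ is non-zero. The paper then uses that $\gamma$ is real, so conjugation is an antilinear bijection between the two deficiency spaces, to conclude $\ker(D_\Gamma^*-iI)\ne\{0\}$.

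You instead prove directly, and self-containedly, that each deficiency index is separately invariant under a bounded symmetric perturbation. This is a hands-on version of the local constancy of the semi-Fredholm index. It gives equality of dimensions rather than mere non-vanishing, and, once the sign is fixed, it does not need the conjugation symmetry of $D_\Gamma$ at all. The cost is length: the paper gets by with two citations and a three-line symmetry argument, which suffices here because the real structure is already built into the construction.
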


To prove this lemma, we need a little operator theory.
Recall that a densely defined operator  $T$ on a Hilbert space $H$
is called \emph{symmetric} if $T\subset T^*$ and \emph{self-adjoint} if $T=T^*$. A densely defined symmetric operator $T$ is \emph{essentially self-adjoint} if its closure $\overline{T}$ is self-adjoint 
(equivalently, if $\overline{T}=T^*$).
We cite two basic results:
\begin{itemize} 
\item A densely defined, symmetric operator $T$ is essentially self-adjoint iff $\ker(T^*-iI)=\{0\}$ and $\ker(T^*+iI)=\{0\}$ (see \cite[Corollary~2.8]{Te14}).
\item If $T$ is essentially self-adjoint and $S$ is a bounded self-adjoint operator on $H$, then $T+S$ is essentially self-adjoint
(see \cite[Theorem~6.4]{Te14}).
\end{itemize}

\begin{proof}[Proof of Lemma~\ref{L:perturb}]
A  routine verification shows that $D_\Gamma$ and $D_\Delta$ are symmetric operators.
On $c_{00}(\ZZ^+)\oplus c_{00}(R)$, we have
\[
D_\Gamma-D_\Delta=
\begin{pmatrix}
0 &\cE^t\\\cE &0
\end{pmatrix},
\]
which is the restriction to 
$c_{00}(\ZZ^+)\oplus c_{00}(R)$
of a bounded self-adjoint operator,
since $\cE$ is real and bounded.
Thus $D_\Gamma$ is essentially self-adjoint if and only if $D_\Delta$ is.
By assumption, $\ker(D_\Delta^*-iI)\ne\{0\}$, so $D_\Delta$ is not essentially self-adjoint. 
Therefore $D_\Gamma$ is not essentially self-adjoint either. 
It follows that at least one of the spaces
$\ker(D_\Gamma^*-iI)$ and $\ker(D_\Gamma^*+iI)$ is non-zero.
As $\gamma$ is a \emph{real} sequence, the conjugation map $(x,y)\mapsto(\overline{x},\overline{y})$ is an antilinear bijection  between $\ker(D_\Gamma^*-iI)$ and $\ker(D_\Gamma^*+iI)$, so, if one of these spaces is
non-zero, then so is the other. We conclude that $\ker(D_\Gamma^*-iI)\ne\{0\}$, as desired.
\end{proof}

We can now enunciate our general strategy.
Having chosen $\gamma$ and $R$ appropriately, 
we shall first form $\Gamma$ as above, and then derive $\Delta$
from it by deleting 
certain entries in its matrix. Our aim is to do this
in such a way that:
\begin{itemize}
\item 
we can see directly that $\ker(D_\Delta^*-iI)\ne\{0\}$
by exhibiting an explicit eigenvector;
\item the error $\cE:=\Gamma-\Delta$ introduced in performing the deletion is a \emph{bounded} operator from $\ell^2(\ZZ^+)$ to $\ell^2(R)$.
\end{itemize}
If we succeed in these aims, then Lemmas~\ref{L:ker}
and \ref{L:perturb} will finish the job.

For all of this to work, great care needs to be taken in the construction of
$\gamma$ and $R$. This is our next task.


\section{Construction of $\gamma,R,\Gamma,\Delta,\cE$}\label{S:construction}

The construction is combinatorial in nature, and will be carried out in
several steps.


\subsection{The sequences $(\mu_n)$ and $(\nu_n)$}

For $n\ge0$, we set
\[
\mu_n:=\frac{4}{3}4^n-\frac{4}{3}
\quad\text{and}\quad
\nu_n:=\frac{5}{3}-\frac{8}{3}4^n.
\]
The reason for this particular choice will become apparent later.
For the moment, we simply observe that $\mu_n$ and $\nu_n$ are
integers satisfying the relations
\begin{equation}\label{E:recurr}
2\mu_n+\nu_n=-1
\quad\text{and}\quad
\nu_n+\frac{1}{2}\mu_{n+1}=1 
\quad(n\ge0).
\end{equation}


\subsection{The numbers $N_n$}\label{S:AB}

Fix integers $A,B$ satisfying $A>16$ and $B>16A^2$.
For $n\ge0$, we define
\[
N_n:=
\begin{cases}
A^n, &n \text{~odd},\\
B^n, &n \text{~even}.
\end{cases}
\]
With this choice of the numbers $(N_n)$, we have
\begin{equation}\label{E:seriesconv}
\sum_{n\ge0}\frac{|\mu_n|^2+|\nu_n|^2}{N_n}<\infty,
\quad
\sum_{j\ge0}\frac{N_{2j+1}^2|\nu_{2j}|^2}{N_{2j}}<\infty,
\qquad
\sum_{j\ge0}\frac{N_{2j+1}^2|\mu_{2j+2}|^2}{N_{2j+2}}<\infty.
\end{equation}


\subsection{The matrices $Q_n$ and the vectors $u_n,v_n$}
For $n\ge0$, set
\[
X_n:=\{x\in\ZZ: 0\le x<N_n\}
\quad\text{and}\quad
Y_n:=\{y\in\ZZ: -N_n<y<N_n\}.
\]
Define $Q_n:X_n\times Y_n\to\{0,1\}$ by
\[
Q_n(x,y):=1_{X_n}(x-y)
\quad(x\in X_n,\,y\in Y_n).
\]
Thus $Q_n$ is a $N_n\times (2N_n-1)$ Toeplitz matrix with rows labeled by $X_n$ and columns by $Y_n$:
\[
Q_n=
\begin{pmatrix}
1 &1  &\dots &1 &1 &0 &\dots  &0 &0\\
0 &1  &\dots &1 &1 &1 &\dots &0 &0 \\
\vdots&\vdots  &\ddots &\vdots &\vdots &\vdots &\ddots &\vdots &\vdots\\
0 &0 &\dots &1 &1 &1 &\dots &1 &0\\
0 &0 &\dots &0 &1 &1 &\dots &1 &1
\end{pmatrix}.
\]
As such, it can be identified
with an operator $Q_n:\ell^2(Y_n)\to\ell^2(X_n)$.
We define $\sigma_n$ to be its operator norm, namely
\[
\sigma_n:=\|Q_n:\ell^2(Y_n)\to\ell^2(X_n)\|.
\]

The following lemma provides a useful lower bound for $\sigma_n$.
 
\begin{lemma}\label{L:sigmabound}
$\sigma_n\ge \sqrt{2/3}N_n$.
\end{lemma}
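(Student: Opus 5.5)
Since $\sigma_n$ is the operator norm of $Q_n:\ell^2(Y_n)\to\ell^2(X_n)$, it suffices to exhibit one test vector $w\in\ell^2(Y_n)\setminus\{0\}$ with $\|Q_nw\|_2/\|w\|_2\ge\sqrt{2/3}\,N_n$. Equivalently, we may use the bilinear characterization $\sigma_n=\sup\{\langle Q_nw,z\rangle:\|w\|=\|z\|=1\}$ and pick a convenient pair $(z,w)$. Writing $N:=N_n$, the natural choice is to take $z:=1_{X_n}$ (the all-ones vector on rows) and $w:=Q_n^t z$, i.e.\ $w(y)=\#\{x\in X_n: 0\le x-y<N\}$. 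Then $\langle Q_nw,z\rangle=\|Q_n^tz\|^2$, so $\sigma_n=\|Q_n^t\|\ge \|Q_n^tz\|/\|z\|$, which reduces the lemma to computing $\|Q_n^t 1_{X_n}\|^2$.

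The computation is elementary. For $y\in Y_n$, the column $y$ of $Q_n$ has ones exactly at the rows $x$ with $\max(0,y)\le x\le \min(N-1,N-1+y)$, so $w(y)=N-|y|$ for $-N<y<N$. Hence
\[
\|Q_n^t1_{X_n}\|^2=\sum_{|y|<N}(N-|y|)^2=N^2+2\sum_{m=1}^{N-1}m^2=N^2+\frac{(N-1)N(2N-1)}{3}=\frac{2N^3+N}{3},
\]
while $\|1_{X_n}\|^2=N$. Therefore $\sigma_n^2\ge (2N^2+1)/3\ge \tfrac23 N^2$, which gives $\sigma_n\ge\sqrt{2/3}\,N_n$.

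There is no real obstacle; the only points to check carefully are the indexing conventions (rows labelled by $X_n$, columns by $Y_n$, and $Q_n(x,y)=1$ iff $0\le x-y<N$), so that the column sums are indeed $N-|y|$, and the identity $\|Q_n\|=\|Q_n^t\|$ for real matrices. If one prefers to avoid the transpose, one can test $Q_n$ directly on $w(y)=N-|y|$. This yields the bound $\|Q_nw\|\ge\langle Q_nw,1_{X_n}\rangle/\sqrt N=\|w\|^2/\sqrt N$, so $\|Q_nw\|/\|w\|\ge \|w\|/\sqrt N$, which is the same estimate.
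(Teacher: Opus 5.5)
Your proposal is correct and follows essentially the same route as the paper: you test $Q_n^t$ on the all-ones vector $1_{X_n}$, compute $(Q_n^t1_{X_n})_y=N_n-|y|$ and $\|Q_n^t1_{X_n}\|^2=(2N_n^3+N_n)/3$, and then use $\|Q_n\|=\|Q_n^t\|$. Your alternative of testing $Q_n$ directly on $w(y)=N_n-|y|$ yields the same estimate.
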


\begin{proof}
Let $u:=(1,1,\dots,1)\in\ell^2(X_n)$. Then $\|u\|_{\ell^2(X_n)}^2=N_n$.
Also, we have
$(Q_n^*u)_y=N_n-|y|$ for all $y\in Y_n$,
so
\[
\|Q_n^*u\|_{\ell^2(Y_n)}^2
=\sum_{y\in Y_n}(N_n-|y|)^2
=N_n^2+2\sum_{j=1}^{N_n-1}j^2
=\frac{2N_n^3+N_n}{3}
\ge\frac{2}{3}N_n^3.
\]
Hence 
\[
\|Q^*_n:\ell^2(X_n)\to\ell^2(Y_n)\|^2
\ge\frac{\|Q_n^*u\|_{\ell^2(Y_n)}^2}{\|u\|_{\ell^2(X_n)}^2}
\ge \frac{(2/3)N_n^3}{N_n}=(2/3)N_n^2. 
\]
Since $Q_n$ and $Q_n^*$ have the same operator
norm, the result follows.
\end{proof}

As $Q_n$ is a real matrix with $\|Q_n\|=\sigma_n$, 
there exist real unit vectors $u_n\in\ell^2(X_n)$
and $v_n\in\ell^2(Y_n)$ such that
\[
Q_nv_n=\sigma_nu_n
\quad\text{and}\quad
Q_n^*u_n=\sigma_n v_n.
\]
We fix a choice of such vectors for each $n\ge0$.


\subsection{The kernels $G_n^{(0)}$ and $G_n^{(1)}$}

For $n\ge0$, we define $G_n^{(0)}:\ZZ^4\to\RR$ by
\[
G_n^{(0)}(z_n^-,z_n^+,z_{n+1}^-,z_{n+1}^+):=
\frac{\nu_n}{\sigma_n}1_{\{0\}}(z_n^-)1_{X_n}(z_n^+)v_{n+1}(-z_{n+1}^-)u_{n+1}(-z_{n+1}^+),
\]
where the vectors $u_{n+1},v_{n+1}$ are understood to be zero off their indexing sets.
Likewise, we define $G_n^{(1)}:\ZZ^4\to\RR$ by
\[
G_n^{(1)}(z_n^-,z_n^+,z_{n+1}^-,z_{n+1}^+):=
\frac{\mu_{n+1}}{\sigma_{n+1}}u_n(-z_n^-)v_n(-z_n^+)1_{X_{n+1}}(z_{n+1}^-)1_{\{0\}}(z_{n+1}^+).
\]

A simple calculation gives
\begin{equation}\label{E:G0norm}
\|G_n^{(0)}\|_{\ell^2(\ZZ^4)}^2
=\frac{|\nu_n|^2}{\sigma_n^2}1\cdot|X_n|\cdot\|v_{n+1}\|_{\ell^2(Y_{n+1})}^2\cdot\|u_{n+1}\|_{\ell^2(X_{n+1})}^2
=\frac{|\nu_n|^2N_n}{\sigma_n^2},
\end{equation}
and likewise
\begin{equation}\label{E:G1norm}
\|G_n^{(1)}\|_{\ell^2(\ZZ^4)}^2
=\frac{|\mu_{n+1}|^2}{\sigma_{n+1}^2}\|u_n\|_{\ell^2(X_n)}^2\cdot\|v_n\|_{\ell^2(Y_n)}^2\cdot|X_{n+1}|\cdot1
=\frac{|\mu_{n+1}|^2N_{n+1}}{\sigma_{n+1}^2}.
\end{equation}


\subsection{Integer encoding and the decoding principle}\label{S:encoding}
The combinatorial models developed thus far
need to be encoded into the positive integers.
We now set up this encoding in such a way that no information is lost.

Let $B$ be the positive integer selected in \S\ref{S:AB}.
For $m\ge0$, set 
\[
c_m:=(8B)^{2^{m+1}}.
\]
Relabel the numbers $c_0,c_1,c_2,\dots$ using the following symbols, in order:
\[
H_0^-,H_0^+,H_1^-,H_1^+,Q_0,P_0,
\]
and then 
\[
H_{2j}^-,H_{2j}^+,H_{2j+1}^-,H_{2j+1}^+,Q_{2j-1},Q_{2j},P_{2j},P_{2j-1},
\]
successively for $j=1,2,3,\dots$.
Thus every symbol $H_n^{\pm},Q_n,P_n$ is one of the $c_m$,
and the ordering of these symbols specified above corresponds to the natural ordering of the $c_m$.

In what follows,  we shall consider finite sums of the form 
\[
\sum_n (x_n^{-}H_n^{-} +x_n^{+}H_n^{+}+p_nP_n+q_nQ_n),
\]
where the coefficients $x_n^{-},x_n^{+},p_n,q_n$ are integers (positive or negative).
If such a sum has some non-zero coefficients,  then the one associated to the largest symbol 
is called the \emph{principal coefficient}.

\begin{lemma}[Decoding principle]\label{L:decode}
Let $\sum_n (x_n^{-}H_n^{-} +x_n^{+}H_n^{+}+p_nP_n+q_nQ_n)$
be a finite sum with integer coefficients satisfying $|x_n^{\pm}|\le 4N_n$ and $|p_n|,|q_n|\le 4$
for all $n\ge0$.
\begin{enumerate}[\normalfont(i)]
\item
 If the sum has some non-zero coefficients, then it has the same sign as its principal coefficient. 
 \item If the sum is equal to zero, then all its coefficients are zero.
 \end{enumerate}
\end{lemma}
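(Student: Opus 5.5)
Part (ii) follows immediately from part (i): a zero sum cannot have a principal coefficient, because that coefficient would force the sum to be nonzero with its sign. So the plan reduces to proving (i), and for that the principle is the usual one. The term belonging to the largest symbol dominates the sum of all smaller terms.

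Suppose the largest symbol with a non-zero coefficient is $c_m$. Its coefficient is an integer, so its term has absolute value at least $c_m$. The coefficients are bounded, since each symbol carries coefficient at most $4N_n$ or $4$, and we must control $N_n$ in terms of the position $m$ of $H_n^\pm$. From the ordering, $H_n^\pm$ occupy positions $m\ge 2n$; in fact $H_{2j}^-$ sits at position $6+8(j-1)\ge 2\cdot 2j$ for $j\ge1$, and $H_0^\pm,H_1^\pm$ sit at positions $0,1,2,3$. Also $N_n\le B^n$, because $A<B$. Hence every coefficient attached to $c_k$ has absolute value at most
\[
4\max(1,B^{k/2})\le 4B^{k}.
\]
So it suffices to show
\[
\sum_{k<m} 4B^{k}c_k< c_m .
\]

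Now use $c_k=(8B)^{2^{k+1}}$. For $k\le m-1$ we have $4B^k c_k\le (8B)^{k+2^{k+1}}$. The exponents $k+2^{k+1}$ are strictly increasing in $k$, so the sum is at most $m$ times the last term $(8B)^{m-1+2^m}$. For this to be less than $c_m=(8B)^{2^{m+1}}$ we need
\[
m\,(8B)^{m-1+2^m}<(8B)^{2^{m+1}},
\]
that is, $m<(8B)^{2^m-m+1}$. This holds because $2^m-m+1\ge1$ and $8B>m$ for the relevant small $m$; for larger $m$ the inequality is overwhelming since $(8B)^{2^m-m+1}\ge 2^{2^m-m+1}>m$.

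Consequently the principal term strictly exceeds, in absolute value, the sum of all the other terms. The sum therefore has the sign of the principal coefficient, which proves (i). (For $m=0$ there are no smaller terms, and the claim is trivial.)

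The only delicate point is the bookkeeping relating the position of $H_n^\pm$ in the list to $n$. This is what ensures the coefficient bound $4N_n$ grows only like $B^{O(k)}$, which is negligible against the doubly exponential $c_k$. I would verify that step carefully from the explicit ordering $H_{2j}^\pm,H_{2j+1}^\pm,\dots$, eight symbols per block.
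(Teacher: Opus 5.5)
Your proposal is correct and is essentially the paper's argument: bound the coefficient of each $c_k$ by $4B^{O(k)}$, let the doubly exponential $c_m$ dominate the lower terms, and deduce (ii) from (i). The paper only needs $n\le m$ when $H_n^\pm=c_m$, which gives the bound $4B^{m+1}$, so your sharper position count $m\ge 2n$ is valid but unnecessary. There is one harmless slip: $4B^kc_k\le(8B)^{k+2^{k+1}}$ fails at $k=0$ (it would require $4\le 1$), so for $m=1$ your ``$m$ times the last term'' bound is off by a factor of $4$. This costs nothing, since $4c_0<c_1$ directly, and for $m\ge2$ the term $4c_0$ is still below $(8B)^{m-1+2^m}$.
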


\begin{proof}
Recall that each symbol $H_n^{\pm},Q_n,P_n$ is one of the numbers $c_m$. If $c_m$ occurs in the sum, then either it corresponds to
some $H_n^{\pm}$ with $n\le m$,   so its coefficient has absolute
value at most $4N_n\le 4B^n\le 4B^m$, or it corresponds to
some $P_n$ or $Q_n$, so its coefficient  has absolute value at most $4$. Either way, its coefficient has absolute value at most $4B^{m+1}$. Thus the sum in question has the form
$\sum_{m=0}^M\lambda_mc_m$, where $\lambda_m$ are integers satisfying
$|\lambda_m|\le 4B^{m+1}$ for all $m$.

If the $\lambda_m$ are not all zero, 
then we may as well choose $M$ so that $\lambda_M$ is the principal
coefficient.
Since $\lambda_M$ is an integer, we have $|\lambda_M|\ge1$.
Then
\begin{align*}
\Bigl|\sum_{m=0}^{M-1}\lambda_mc_m\Bigr|
&\le \sum_{m=0}^{M-1} 4B^{m+1}(8B)^{2^{m+1}}
\le 4MB^M(8B)^{2^M}\\
&<(8B)^{2^{M+1}}=c_M\le|\lambda_Mc_M|.
\end{align*}
Thus $\sum_{m=0}^M\lambda_mc_m$ has the same sign as $\lambda_Mc_M$. As $c_M>0$, this is the same sign as the principal coefficient $\lambda_M$.
This proves part (i) of the lemma. Part~(ii)
is an immediate consequence.
\end{proof}


\subsection{The support sets $D_n^{(0)},D_n^{(1)}$ and the sequence $\gamma$}

For $n\ge0$, define
\begin{align*}
D_n^{(0)}&:=\Bigl\{P_n-Q_n+z_n^-H_n^-+z_n^+H_n^++z_{n+1}^-H_{n+1}^-+z_{n+1}^+H_{n+1}^+:\\
&\qquad\qquad G_n^{(0)}(z_n^-,z_n^+,z_{n+1}^-,z_{n+1}^+)\ne0\Bigr\},\\
D_n^{(1)}&:=\Bigl\{P_{n+1}-Q_n+z_n^-H_n^-+z_n^+H_n^++z_{n+1}^-H_{n+1}^-+z_{n+1}^+H_{n+1}^+:\\
&\qquad\qquad G_n^{(1)}(z_n^-,z_n^+,z_{n+1}^-,z_{n+1}^+)\ne0\Bigr\}.
\end{align*}

Note that, if $(z_n^-,z_n^+,z_{n+1}^-,z_{n+1}^+)$
lies in the support of either $G_n^{(0)}$ or $G_n^{(1)}$,
then necessarily 
$|z_n^{\pm}|\le N_n$ and $|z_{n+1}^{\pm}|\le N_{n+1}$.
The decoding principle Lemma~\ref{L:decode} 
therefore shows that the 
elements of $D_n^{(0)}$ and $D_n^{(1)}$ are all positive, that their 
coordinate representations are unique, and that the sets $D_n^{(0)}, D_n^{(1)}~(n\ge0)$ are all pairwise disjoint.

For $d\in D_n^{(0)}$, say $d=P_n-Q_n+z_n^-H_n^-+z_n^+H_n^++z_{n+1}^-H_{n+1}^-+z_{n+1}^+H_{n+1}^+$, we define
\[
\gamma_d:=G_n^{(0)}(z_n^-,z_n^+,z_{n+1}^-,z_{n+1}^+).
\]
Likewise, for $d\in D_n^{(1)}$, say
$d=P_{n+1}-Q_n+z_n^-H_n^-+z_n^+H_n^++z_{n+1}^-H_{n+1}^-+z_{n+1}^+H_{n+1}^+$,
we define
\[
\gamma_d:=G_n^{(1)}(z_n^-,z_n^+,z_{n+1}^-,z_{n+1}^+).
\]
Finally, for $d\in\ZZ^+\setminus\cup_{n\ge0}(D_n^{(0)}\cup D_n^{(1)})$, we set 
\[
\gamma_d:=0.
\]
For convenience, we also adopt the convention that
$\gamma_d=0$ for $d<0$.

Clearly $\gamma$ is real-valued.
We also note that
\begin{align*}
\sum_{d\in\ZZ^+}|\gamma_d|^2
&=\sum_{n\ge0}\sum_{d\in D_n^{(0)}}|\gamma_d|^2+
\sum_{n\ge0}\sum_{d\in D_n^{(1)}}|\gamma_d|^2\\
&=\sum_{n\ge0}\|G_n^{(0)}\|_{\ell^2(\ZZ^4)}^2+\sum_{n\ge0}\|G_n^{(1)}\|_{\ell^2(\ZZ^4)}^2\\
&=\sum_{n\ge0}\frac{|\nu_n|^2N_n}{\sigma_n^2}
+\sum_{n\ge0}\frac{|\mu_{n+1}|^2N_{n+1}}{\sigma_{n+1}^2}\\
&\le  \sum_{n\ge0}\frac{3|\nu_n|^2}{2N_n}+\sum_{n\ge0}\frac{3|\mu_{n+1}|^2}{2N_{n+1}}<\infty,
\end{align*}
where we have used successively the definition of $\gamma$,
the relations \eqref{E:G0norm} and \eqref{E:G1norm},
Lemma~\ref{L:sigmabound}, and the convergence of 
the first series in \eqref{E:seriesconv}.
Thus $\gamma\in\ell^2(\ZZ^+,\RR)$.
Finally it is clear that $\gamma\not\equiv0$.


\subsection{The row and column index sets $R_n,K_n$
and the set $R$}

For $n\ge0$, we define the row index set $R_n$ by
\[
R_n:=\Bigl\{P_n+x_n^-H_n^-+x_n^+H_n^+: x_n^-,x_n^+\in X_n\Bigr\}.
\]
Clearly $R_n$ is a finite set of integers. The decoding principle Lemma~\ref{L:decode} shows that all the elements of $R_n$ are positive,
that their coordinate representations are unique,
and that the sets $R_n$ are pairwise disjoint.

Likewise, for each $n\ge0$, we define the column index set $K_n$ by
\begin{align*}
K_n&:=\Bigl\{Q_n+y_n^-H_n^-+y_n^+H_n^++y_{n+1}^-H_{n+1}^-+y_{n+1}^+H_{n+1}^+: \\
&\qquad\qquad y_n^-\in X_n,\, y_n^+\in Y_n,\, y_{n+1}^-\in Y_{n+1}, \, y_{n+1}^+\in X_{n+1}\Bigr\}.
\end{align*}
Again by the decoding principle, all the elements of $K_n$ are positive, 
their coordinate representations are unique,  and the sets $K_n$ 
are pairwise disjoint.

Finally in this section, we define
\[
R:=\bigcup_{n\ge0}R_n.
\]


\subsection{The operators $\Gamma,\Delta,\cE$}\label{S:GDE}

Now that $\gamma$ and $R$ are defined, we can construct
the operators $\Gamma,\Delta,\cE$, as proposed in
\S\ref{S:strategy}. 

First of all, we define $\Gamma:c_{00}(\ZZ^+)\subset\ell^2(\ZZ^+)\to\ell^2(R)$ to be the (unbounded) operator with matrix
\[
\Gamma_{r,k}:=\gamma_{r-k} \quad(r\in R,\,k\in\ZZ^+).
\]

Next, to construct $\Delta$, we first introduce some auxiliary sets as follows. For each $n\ge0$, put
\begin{align*}
S_n^{(0)}&:=\{(r,k)\in  R_n\times \ZZ^+: r-k\in D_n^{(0)}\},\\
S_n^{(1)}&:=\{(r,k)\in  R_{n+1}\times \ZZ^+ : r-k\in D_n^{(1)}\}.
\end{align*}
Define $\Delta:c_{00}(\ZZ^+)\subset\ell^2(\ZZ^+)\to\ell^2(R)$
to be the (unbounded) operator whose matrix is given by
\[
\Delta_{r,k}:=
\begin{cases}
\Gamma_{r,k}, &(r,k)\in \cup_{n\ge0}(S_n^{(0)}\cup S_n^{(1)})\\
0, &(r,k)\in (R\times\ZZ^+)\setminus \cup_{n\ge0}(S_n^{(0)}\cup S_n^{(1)}).
\end{cases}
\]

Lastly, we define $\cE:c_{00}(\ZZ^+)\subset\ell^2(\ZZ^+)\to\ell^2(R)$ simply by setting
\[
\cE:=\Gamma-\Delta.
\]

All the objects $\gamma,R,\Gamma,\Delta,\cE$ have now been constructed.
Following the strategy proposed in \S\ref{S:strategy}, it remains
to show that
\begin{itemize}
\item $\ker(D_\Delta^*-iI)\ne\{0\}$, and
\item $\cE$ is a bounded operator.
\end{itemize}
This is the subject of the next two sections.


\section{The operator $\Delta$}

In this section we  study the operator $\Delta$
with a view to showing that 
$\ker(D_\Delta^*-iI)\ne\{0\}$.
We shall do this by exhibiting an explicit
eigenvector of $D_\Delta^*$ with eigenvalue $i$.
The construction of the eigenvector proceeds through several  lemmas.

We begin with a technical lemma on the sets $S_n^{(0)},S_n^{(1)}$
defined in the previous section.

\begin{lemma}\label{L:saturation}
For all $n\ge0$, we have
$S_n^{(0)}\subset R_n\times K_n$
and $S_n^{(1)}\subset R_{n+1}\times K_n$.
\end{lemma}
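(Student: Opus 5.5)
Take $(r,k)\in S_n^{(0)}$. Then $r\in R_n$ and $d:=r-k\in D_n^{(0)}$, so $k=r-d$, and it suffices to show that $r-d$ lies in $K_n$. Write
\[
r=P_n+x_n^-H_n^-+x_n^+H_n^+
\quad\text{with } x_n^\pm\in X_n,
\]
\[
d=P_n-Q_n+z_n^-H_n^-+z_n^+H_n^++z_{n+1}^-H_{n+1}^-+z_{n+1}^+H_{n+1}^+,
\]
where $G_n^{(0)}(z_n^-,z_n^+,z_{n+1}^-,z_{n+1}^+)\ne0$. Subtracting,
\[
k=Q_n+(x_n^--z_n^-)H_n^-+(x_n^+-z_n^+)H_n^+-z_{n+1}^-H_{n+1}^--z_{n+1}^+H_{n+1}^+.
\]
The $P_n$ terms cancel, which is the point of the construction. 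It remains to check the coordinate ranges required in the definition of $K_n$.

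The support of $G_n^{(0)}$ gives the following constraints:
\begin{itemize}
\item $z_n^-=0$, so $y_n^-:=x_n^-\in X_n$;
\item $z_n^+\in X_n$, so $y_n^+:=x_n^+-z_n^+$ is a difference of two elements of $X_n$, hence lies in $Y_n$;
\item $v_{n+1}(-z_{n+1}^-)\ne0$ forces $-z_{n+1}^-\in Y_{n+1}$;
\item $u_{n+1}(-z_{n+1}^+)\ne0$ forces $-z_{n+1}^+\in X_{n+1}$.
\end{itemize}
These are exactly the constraints $y_n^-\in X_n$, $y_n^+\in Y_n$, $y_{n+1}^-\in Y_{n+1}$ and $y_{n+1}^+\in X_{n+1}$ defining $K_n$. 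So $k\in K_n$.

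The case $(r,k)\in S_n^{(1)}$ is symmetric. Now $r\in R_{n+1}$, so
\[
r=P_{n+1}+x_{n+1}^-H_{n+1}^-+x_{n+1}^+H_{n+1}^+,
\]
and $d$ carries $P_{n+1}-Q_n$, so again the $P$ terms cancel. The support of $G_n^{(1)}$ gives:
\begin{itemize}
\item $-z_n^-\in X_n$, so $y_n^-:=-z_n^-\in X_n$;
\item $-z_n^+\in Y_n$, so $y_n^+:=-z_n^+\in Y_n$;
\item $z_{n+1}^-\in X_{n+1}$, so $y_{n+1}^-:=x_{n+1}^--z_{n+1}^-\in Y_{n+1}$;
\item $z_{n+1}^+=0$, so $y_{n+1}^+:=x_{n+1}^+\in X_{n+1}$.
\end{itemize}
Hence $k\in K_n$.

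There is no real obstacle here: this is bookkeeping. The one point needing care is that $k$ is defined as the integer $r-d$, and membership in $K_n$ is a statement about that integer. Exhibiting the representation above with coordinates in the required ranges therefore suffices. Uniqueness of coordinates, via Lemma~\ref{L:decode}, is not even needed for the inclusion.
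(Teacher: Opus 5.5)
Your proposal is correct and follows the paper's proof essentially verbatim: subtract the representation of $r-k\in D_n^{(0)}$ (resp.\ $D_n^{(1)}$) from that of $r\in R_n$ (resp.\ $R_{n+1}$), note that the $P$-terms cancel, and read off from the support of $G_n^{(0)}$ (resp.\ $G_n^{(1)}$), together with $X_m-X_m=Y_m$, that the resulting coordinates lie in the ranges defining $K_n$. Your observation that existence of these representations suffices, so that uniqueness via the decoding principle is not needed, is also accurate.
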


\begin{proof}
Let $(r,k)\in S_n^{(0)}$. Then $r\in R_n$, so
\[
r=P_n+x_n^-H_n^-+x_n^+H_n^+,
\]
where $x_n^-,x_n^+\in X_n$. Also $r-k\in D_n^{(0)}$, so
\[
r-k=P_n-Q_n+z_n^-H_n^-+z_n^+H_n^++z_{n+1}^-H_{n+1}^-+z_{n+1}^+H_{n+1}^+,
\]
where $(z_n^-,z_n^+,z_{n+1}^-,z_{n+1}^+)\in \supp G_n^{(0)}\subset\{0\}\times X_n\times (-Y_{n+1})\times(-X_{n+1})$.
It follows that
\[
k = Q_n+x_n^-H_n^-+(x_n^+-z_n^+)H_n^+-z_{n+1}^-H_{n+1}^--z_{n+1}^+H_{n+1}^+,
\]
where $x_n^-\in X_n$ and $(x_n^+-z_n^+)\in (X_n-X_n)=Y_n$ and $-z_{n+1}^-\in Y_{n+1}$ and $-z_{n+1}^+\in X_{n+1}$. From the definition of~$K_n$,
we deduce that $k\in K_n$. Hence $S_n^{(0)}\subset R_n\times K_n$.

Now let $(r,k)\in S_n^{(1)}$. Then $r\in R_{n+1}$, so
\[
r=P_{n+1}+x_{n+1}^-H_{n+1}^-+x_{n+1}^+H_{n+1}^+,
\]
where $x_{n+1}^-,x_{n+1}^+\in X_{n+1}$. Also $r-k\in D_n^{(1)}$, so
\[
r-k=P_{n+1}-Q_n+z_n^-H_n^-+z_n^+H_n^++z_{n+1}^-H_{n+1}^-+z_{n+1}^+H_{n+1}^+,
\]
where $(z_n^-,z_n^+,z_{n+1}^-,z_{n+1}^+)\in \supp G_n^{(1)}\subset (-X_n)\times(-Y_n)\times X_{n+1}\times\{0\}$.
It follows that
\[
k = Q_n-z_n^-H_n^--z_n^+H_n^++(x_{n+1}^--z_{n+1}^-)H_{n+1}^-+x_{n+1}^+H_{n+1}^+,
\]
where $(-z_n^-)\in X_n$ and $(-z_n^+)\in Y_n$ and $(x_{n+1}^--z_{n+1}^-)\in (X_{n+1}-X_{n+1})=Y_{n+1}$ and $x_{n+1}^+\in X_{n+1}$. From the definition of~$K_n$,
we deduce that $k\in K_n$. Hence $S_n^{(1)}\subset R_{n+1}\times K_n$.
\end{proof}

For each $n\ge0$, we define a vector $U_n\in c_{00}(R)$ 
as follows. If $r\in R_n$, say
\[
r=P_n+x_n^-H_n^-+x_n^+H_n^+,
\]
where $x_n^{\pm}\in X_n$, then
\[
(U_n)_r:=u_n(x_n^-)u_n(x_n^+).
\]
Otherwise we set $(U_n)_r:=0$.
The vectors $(U_n)_{n\ge0}$ are disjointly supported 
unit vectors in $\ell^2(R)$, so they form an orthonormal
sequence in $\ell^2(R)$.

Likewise, for each $n\ge0$, we define a vector $V_n\in c_{00}(\ZZ^+)$ as follows.
If $k\in K_n$, say
\[
k=Q_n+y_n^-H_n^-+y_n^+H_n^++y_{n+1}^-H_{n+1}^-+y_{n+1}^+H_{n+1}^+,
\]
where  $y_n^-\in X_n,\, y_n^+\in Y_n,\, y_{n+1}^-\in Y_{n+1}, \, y_{n+1}^+\in X_{n+1}$, then
\[
(V_n)_k:=u_n(y_n^-)v_n(y_n^+)v_{n+1}(y_{n+1}^-)u_{n+1}(y_{n+1}^+).
\]
Otherwise we set $(V_n)_k:=0$.
The vectors $(V_n)_{n\ge0}$ are disjointly supported unit vectors
in $\ell^2(\ZZ^+)$, so they form an orthonormal
sequence in $\ell^2(\ZZ^+)$.

\begin{lemma}\label{L:DeltaVcalc}
For each $n\ge0$, we have
\[
\Delta V_n=\nu_n U_n+\mu_{n+1}U_{n+1}.
\]
\end{lemma}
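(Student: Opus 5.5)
We compute $(\Delta V_n)_r$ for each $r\in R$. The starting point is Lemma~\ref{L:saturation}. Since $V_n$ is supported on $K_n$, a nonzero entry $\Delta_{r,k}$ with $k\in K_n$ requires $(r,k)\in S_m^{(0)}\cup S_m^{(1)}$ for some $m$, and that lemma then forces $k\in K_m$. The $K_m$ are pairwise disjoint, so $m=n$. Consequently the only contributions come from $S_n^{(0)}\subset R_n\times K_n$ and $S_n^{(1)}\subset R_{n+1}\times K_n$. It follows that $\Delta V_n$ is supported on $R_n\cup R_{n+1}$, and on each piece exactly one kernel contributes: $G_n^{(0)}$ on $R_n$ and $G_n^{(1)}$ on $R_{n+1}$.

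\emph{Rows in $R_n$.} Write $r=P_n+x_n^-H_n^-+x_n^+H_n^+$ and let $k\in K_n$ have coordinates $(y_n^-,y_n^+,y_{n+1}^-,y_{n+1}^+)$. Then
\[
r-k=P_n-Q_n+(x_n^--y_n^-)H_n^-+(x_n^+-y_n^+)H_n^+-y_{n+1}^-H_{n+1}^--y_{n+1}^+H_{n+1}^+ .
\]
By the decoding principle (uniqueness of coordinates), $r-k\in D_n^{(0)}$ exactly when these coordinates lie in $\supp G_n^{(0)}$, in which case $\gamma_{r-k}=G_n^{(0)}$ evaluated there. The factor $1_{\{0\}}$ forces $y_n^-=x_n^-$, and the remaining factors become $1_{X_n}(x_n^+-y_n^+)\,v_{n+1}(y_{n+1}^-)\,u_{n+1}(y_{n+1}^+)$. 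Summing against $(V_n)_k$ gives
\[
\frac{\nu_n}{\sigma_n}\,u_n(x_n^-)\Bigl(\sum_{y\in Y_n}1_{X_n}(x_n^+-y)v_n(y)\Bigr)\Bigl(\sum_{y}v_{n+1}(y)^2\Bigr)\Bigl(\sum_{y}u_{n+1}(y)^2\Bigr).
\]
The middle sum is $(Q_nv_n)(x_n^+)=\sigma_nu_n(x_n^+)$, and the last two sums equal $1$. Hence the entry is $\nu_nu_n(x_n^-)u_n(x_n^+)=\nu_n(U_n)_r$.

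\emph{Rows in $R_{n+1}$.} Write $r=P_{n+1}+x_{n+1}^-H_{n+1}^-+x_{n+1}^+H_{n+1}^+$. Now $r-k$ has coordinates $(-y_n^-,-y_n^+,x_{n+1}^--y_{n+1}^-,x_{n+1}^+-y_{n+1}^+)$ and lies in $D_n^{(1)}$. The factor $1_{\{0\}}$ forces $y_{n+1}^+=x_{n+1}^+$, and $G_n^{(1)}$ contributes $u_n(y_n^-)v_n(y_n^+)1_{X_{n+1}}(x_{n+1}^--y_{n+1}^-)$. The sums of $u_n^2$ and $v_n^2$ give $1$, and $\sum_y 1_{X_{n+1}}(x_{n+1}^--y)v_{n+1}(y)=\sigma_{n+1}u_{n+1}(x_{n+1}^-)$. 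The entry is therefore $\mu_{n+1}u_{n+1}(x_{n+1}^-)u_{n+1}(x_{n+1}^+)=\mu_{n+1}(U_{n+1})_r$.

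The main care point is the bookkeeping. The coordinate ranges must match: $y\in Y_n$ exactly covers the support of $1_{X_n}(x-\cdot)$, and the remaining sums run over the full index sets, which holds because the vectors vanish off $X$ and $Y$. We also need the decoding principle to guarantee that $r-k$ has a unique representation, with all coefficients within the bounds of Lemma~\ref{L:decode}. Beyond that, the computation is routine.
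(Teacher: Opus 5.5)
Your proof is correct and takes essentially the same route as the paper. You use the same support reduction via Lemma~\ref{L:saturation} and the disjointness of the $K_m$, followed by the same factorized computation using $Q_nv_n=\sigma_nu_n$ (resp.\ $Q_{n+1}v_{n+1}=\sigma_{n+1}u_{n+1}$) and the unit norms of the remaining vectors. The only cosmetic difference is the parametrization: you sum over the $K_n$-coordinates of $k$ and decode $r-k$, whereas the paper sums over the coordinates $z$ of $r-k\in\supp G_n^{(0)}$ (resp.\ $\supp G_n^{(1)}$) and decodes $k=r-(r-k)$; your parametrization is the one the paper itself uses in the proof of Lemma~\ref{L:DeltatUcalc}.
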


\begin{proof}
We shall prove this equality by verifying that the
$r$-th coordinates of both sides agree for all $r\in R$.

First, suppose that $r\in R\setminus(R_n\cup R_{n+1})$. 
Then, by Lemma~\ref{L:saturation}, we have $\Delta_{r,k}=0$ for all $k\in K_n$, and so $(\Delta V_n)_r=0$.

Now suppose that $r\in R_n$. Then
\[
(\Delta V_n)_r
=\sum_{k:(r,k)\in S_n^{(0)}}\Gamma_{r,k}(V_n)_k
=\sum_{k:(r,k)\in S_n^{(0)}}\gamma_{r-k} (V_n)_k.
\]
Now, if $(r,k)\in S_n^{(0)}$, then $r\in R_n$ and $r-k\in D_n^{(0)}$, so
\begin{align*}
r&=P_n+x_n^{-}H_n^-+x_n^+H_n^+,\\
r-k&=P_n-Q_n+z_n^-H_n^-+z_n^+H_n^++z_{n+1}^-H_{n+1}^-+z_{n+1}^+H_{n+1}^+,
\end{align*}
where $x_n^\pm\in X_n$ and $(z_n^-,z_n^+,z_{n+1}^-,z_{n+1}^+)\in\supp G_n^{(0)}$.
Therefore
\begin{align*}
\gamma_{r-k}
&=G_n^{(0)}(z_n^-,z_n^+,z_{n+1}^-,z_{n+1}^+)\\
&=\frac{\nu_n}{\sigma_n}1_{\{0\}}(z_n^-)1_{X_n}(z_n^+)v_{n+1}(-z_{n+1}^-)u_{n+1}(-z_{n+1}^+).
\end{align*}
Also $k\in K_n$ by Lemma~\ref{L:saturation}, and
\begin{align*}
k&=r-(r-k)\\
&=Q_n+(x_n^--z_n^-)H_n^-+(x_n^+-z_n^+)H_n^+-z_{n+1}^-H_{n+1}^--z_{n+1}^+H_{n+1}^+,
\end{align*}
so
\[ 
(V_n)_{k}=u_n(x_n^--z_n^-)v_n(x_n^+-z_n^+)v_{n+1}(-z_{n+1}^-)u_{n+1}(-z_{n+1}^+).
\]
Combining these observations, we deduce
\begin{align*}
&\sum_{k:(r,k)\in S_n^{(0)}}\gamma_{r-k} (V_n)_k\\
&\qquad=\sum_{z_n^\pm,z_{n+1}^\pm\in\ZZ}\frac{\nu_n}{\sigma_n}\Bigl(1_{\{0\}}(z_n^-)1_{X_n}(z_n^+)v_{n+1}(-z_{n+1}^-)u_{n+1}(-z_{n+1}^+)\Bigr)\times\\
&\qquad\qquad\qquad\Bigl(u_n(x_n^--z_n^-)v_n(x_n^+-z_n^+)v_{n+1}(-z_{n+1}^-)u_{n+1}(-z_{n+1}^+)\Bigr)\\
&\qquad=\frac{\nu_n}{\sigma_n}u_n(x_n^-)\sum_{z_n^+\in\ZZ}1_{X_n}(z_n^+)v_n(x_n^+-z_n^+)\\
&\qquad=\frac{\nu_n}{\sigma_n}u_n(x_n^-)(Q_nv_n)(x_n^+)\\
&\qquad=\nu_nu_n(x_n^-)u_n(x_n^+)\\
&\qquad=\nu_n(U_n)_r.
\end{align*}
Here
the second equality used the fact that $u_{n+1},v_{n+1}$ are unit vectors,
the third one used the definition of $Q_n$, the fourth one
used the definition of $u_n,v_n$, and the fifth one used the definition of $U_n$.
Thus $(\Delta V_n)_r=\nu_n(U_n)_r$ for all 
$r\in R_n$.

Lastly suppose that $r\in R_{n+1}$. Then
\[
(\Delta V_n)_r
=\sum_{k:(r,k)\in S_n^{(1)}}\Gamma_{r,k}(V_n)_k
=\sum_{k:(r,k)\in S_n^{(1)}}\gamma_{r-k}(V_n)_k.
\]
Now, if $(r,k)\in S_n^{(1)}$, then $r\in R_{n+1}$ and $r-k\in D_n^{(1)}$, so
\begin{align*}
r&=P_{n+1}+x_{n+1}^{-}H_{n+1}^-+x_{n+1}^+H_{n+1}^+,\\
r-k&=P_{n+1}-Q_n+z_n^-H_n^-+z_n^+H_n^++z_{n+1}^-H_{n+1}^-+z_{n+1}^+H_{n+1}^+,
\end{align*}
where $x_{n+1}^\pm\in X_{n+1}$ and $(z_n^-,z_n^+,z_{n+1}^-,z_{n+1}^+)\in\supp G_n^{(1)}$.
Therefore
\begin{align*}
\gamma_{r-k}
&=G_n^{(1)}(z_n^-,z_n^+,z_{n+1}^-,z_{n+1}^+)\\
&=\frac{\mu_{n+1}}{\sigma_{n+1}}u_n(-z_n^-)v_n(-z_n^+)1_{X_{n+1}}(z_{n+1}^-)1_{\{0\}}(z_{n+1}^+).
\end{align*}
Also $k\in K_n$ by Lemma~\ref{L:saturation}, and 
\begin{align*}
k&=r-(r-k)\\
&=Q_n-z_n^-H_n^--z_n^+H_n^++(x_{n+1}^--z_{n+1}^-)H_{n+1}^-+(x_{n+1}^+-z_{n+1}^+)H_{n+1}^+,
\end{align*}
so
\[ 
(V_n)_{k}=u_n(-z_n^-)v_n(-z_n^+)v_{n+1}(x_{n+1}^--z_{n+1}^-)u_{n+1}(x_{n+1}^+-z_{n+1}^+).
\]
Combining these observations, we deduce
\begin{align*}
&\sum_{k:(r,k)\in S_n^{(1)}}\gamma_{r-k}(V_n)_k\\
&\qquad=\sum_{z_n^\pm,z_{n+1}^\pm\in\ZZ}\frac{\mu_{n+1}}{\sigma_{n+1}}\Bigl(u_n(-z_n^-)v_n(-z_n^+)1_{X_{n+1}}(z_{n+1}^-)1_{\{0\}}(z_{n+1}^+)\Bigr)\times\\
&\qquad\qquad\qquad\Bigl(u_n(-z_n^-)v_n(-z_n^+)v_{n+1}(x_{n+1}^--z_{n+1}^-)u_{n+1}(x_{n+1}^+-z_{n+1}^+)\Bigr)\\
&\qquad=\frac{\mu_{n+1}}{\sigma_{n+1}}u_{n+1}(x_{n+1}^+)\sum_{z_{n+1}^-\in\ZZ}1_{X_{n+1}}(z_{n+1}^-)v_{n+1}(x_{n+1}^--z_{n+1}^-)\\
&\qquad=\frac{\mu_{n+1}}{\sigma_{n+1}}u_{n+1}(x_{n+1}^+)(Q_{n+1}v_{n+1})(x_{n+1}^-)\\
&\qquad=\mu_{n+1}u_{n+1}(x_{n+1}^+)u_{n+1}(x_{n+1}^-)\\
&\qquad=\mu_{n+1}(U_{n+1})_r.
\end{align*}
We deduce that $(\Delta V_n)_r=\mu_{n+1}(U_{n+1})_r$ for all 
$r\in R_{n+1}$.

Putting everything together, we conclude that
\[
\Delta V_n=\nu_n U_n+\mu_{n+1}U_{n+1}.
\]
This completes the proof of the lemma.
\end{proof}

\begin{lemma}\label{L:DeltatUcalc}
For each $n\ge1$ we have
\[
\Delta^t U_n=\mu_n V_{n-1}+\nu_nV_n,
\]
and if $n=0$ then
\[
\Delta^t U_0=\nu_0V_0.
\]
\end{lemma}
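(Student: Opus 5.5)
The cleanest route avoids recomputing any matrix entries and reduces the claim to Lemma~\ref{L:DeltaVcalc} by duality. Since $\Delta$ has a real matrix, its formal transpose satisfies $\Delta^t_{k,r}=\Delta_{r,k}$. Hence, for every $k\in\ZZ^+$ and $n\ge0$,
\[
(\Delta^tU_n)_k=\sum_{r\in R}\Delta_{r,k}(U_n)_r=\langle \Delta e_k,U_n\rangle_{\ell^2(R)}.
\]
The sum is finite because $U_n$ is finitely supported. The plan is therefore to compute the pairing $\langle \Delta e_k,U_n\rangle$ for every $k$.

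First, I locate the possible nonzero entries. By definition, $\Delta_{r,k}\ne0$ forces $(r,k)\in S_m^{(0)}\cup S_m^{(1)}$ for some $m$. Lemma~\ref{L:saturation} then gives one of two cases: either $r\in R_m$ and $k\in K_m$, or $r\in R_{m+1}$ and $k\in K_m$. Take $r\in R_n$. If $n\ge1$, this forces $k\in K_n\cup K_{n-1}$. If $n=0$, it forces $k\in K_0$, since $R_0$ meets only $S_0^{(0)}$.

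It follows that $\Delta^tU_n$ is supported in $K_{n-1}\cup K_n$, or in $K_0$ when $n=0$. On each $K_m$, the vector $\Delta^tU_n$ is finitely supported, so it suffices to identify its restriction to $K_m$. I would like to say that this restriction is proportional to $V_m$. That is not obvious directly, however, so I will instead compute inner products against all of $\ell^2(\ZZ^+)$.

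Next, I claim that for every finitely supported $w$, one has $\langle \Delta^tU_n,w\rangle=\langle U_n,\Delta w\rangle$. This holds because all the sums involved are finite. By Lemma~\ref{L:DeltaVcalc}, taking $w=V_m$ gives
\[
\langle \Delta^tU_n,V_m\rangle=\langle U_n,\nu_mU_m+\mu_{m+1}U_{m+1}\rangle.
\]
By orthonormality of the $U_m$, this equals $\nu_n$ when $m=n$, equals $\mu_n$ when $m=n-1$, and vanishes otherwise.

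This pins down only the component along $V_m$, so the main obstacle is to show that the restriction of $\Delta^tU_n$ to $K_m$ has no component orthogonal to $V_m$. I would handle this by an explicit coordinate calculation mirroring the proof of Lemma~\ref{L:DeltaVcalc}. Fix $k\in K_n$, written as
\[
k=Q_n+y_n^-H_n^-+y_n^+H_n^++y_{n+1}^-H_{n+1}^-+y_{n+1}^+H_{n+1}^+.
\]
The terms contributing to $(\Delta^tU_n)_k$ come from $r=k+d$ with $d\in D_n^{(0)}$. The decoding principle forces $z_n^-=0$, $z_{n+1}^-=-y_{n+1}^-$, $z_{n+1}^+=-y_{n+1}^+$, and $x_n^-=y_n^-$, with $x_n^+=y_n^++z_n^+$, $z_n^+\in X_n$ and $x_n^+\in X_n$. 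Summing over $z_n^+$ gives
\[
\frac{\nu_n}{\sigma_n}\,v_{n+1}(y_{n+1}^-)\,u_{n+1}(y_{n+1}^+)\,u_n(y_n^-)\,(Q_n^*u_n)(y_n^+),
\]
where $(Q_n^*u_n)(y)=\sum_{x\in X_n}1_{X_n}(x-y)u_n(x)$. Since $Q_n^*u_n=\sigma_nv_n$, this equals $\nu_n(V_n)_k$.

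The case $k\in K_{n-1}$ is analogous and uses $D_{n-1}^{(1)}$. Here $z_n^+=0$, and the sum over $z_n^-\in X_n$ produces $(Q_n^*u_n)(y_n^-)=\sigma_nv_n(y_n^-)$, giving $\mu_n(V_{n-1})_k$.

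The one point requiring care is that, for fixed $k$ and $d$, the resulting $r$ automatically lies in $R_n$ exactly when the $x$-coordinates land in $X_n$. Decoding uniqueness guarantees this, and the indicator functions in the definitions of $Q_n$ and $G$ account for it. Assembling the two cases yields the stated formula, including the $n=0$ case where no $K_{-1}$ term appears.
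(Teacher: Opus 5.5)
Your proposal is correct. Once the duality detour via Lemma~\ref{L:DeltaVcalc} is set aside (it only recovers the $V_m$-components and is not needed), it is essentially the paper's proof: Lemma~\ref{L:saturation} confines the support of $\Delta^tU_n$ to $K_n\cup K_{n-1}$ (just $K_0$ when $n=0$), and the coordinate computations on $K_n$ via $G_n^{(0)}$ and on $K_{n-1}$ via $G_{n-1}^{(1)}$, each collapsing to $Q_n^*u_n=\sigma_nv_n$, are exactly the paper's. One cosmetic point: $z_n^-=0$ (resp.\ $z_n^+=0$) comes from the support of $G_n^{(0)}$ (resp.\ $G_{n-1}^{(1)}$), not from the decoding principle itself.
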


\begin{proof}
This is similar to the previous proof, though there are also
some differences.

First, suppose that $k\in \ZZ^+\setminus(K_n\cup K_{n-1})$
(or that $k\in\ZZ^+\setminus K_0$ in the case $n=0$). 
Then, by Lemma~\ref{L:saturation}, we have $\Delta_{r,k}=0$ for all $r\in R_n$, and so $(\Delta^t U_n)_k=0$.

Now suppose that $k\in K_n$. Then
\[
(\Delta^t U_n)_k=\sum_{r\in R_n}\Delta_{r,k}(U_n)_r
=\sum_{r:(r,k)\in S_n^{(0)}}\Gamma_{r,k}(U_n)_r
=\sum_{r:(r,k)\in S_n^{(0)}}\gamma_{r-k}(U_n)_r.
\]
Now, if $r\in R_n$ and $k\in K_n$, then
\begin{align*}
r&=P_n+x_n^{-}H_n^-+x_n^+H_n^+,\\
k&=Q_n+y_n^-H_n^-+y_n^+H_n^++y_{n+1}^-H_{n+1}^-+y_{n+1}^+H_{n+1}^+,
\end{align*}
where $x_n^\pm\in X_n$ and $(y_n^-,y_n^+,y_{n+1}^-,y_{n+1}^+)\in X_n\times Y_n\times Y_{n+1}\times X_{n+1}$.
Then
\[
r-k=P_n-Q_n+(x_n^--y_n^-)H_n^-+(x_n^+-y_n^+)H_n^+-y_{n+1}^-H_{n+1}^--y_{n+1}^+H_{n+1}^+,
\]
and so
\begin{align*}
\gamma_{r-k}
&=G_n^{(0)}(x_n^--y_n^-,x_n^+-y_n^+,-y_{n+1}^-,-y_{n+1}^+)\\
&=\frac{\nu_n}{\sigma_n}1_{\{0\}}(x_n^--y_n^-)1_{X_n}(x_n^+-y_n^+)v_{n+1}(y_{n+1}^-)u_{n+1}(y_{n+1}^+).
\end{align*}
Also we have
\[
(U_n)_r=u_n(x_n^-)u_n(x_n^+).
\]
Combining these observations, we deduce
\begin{align*}
&\sum_{r:(r,k)\in S_n^{(0)}}\gamma_{r-k}(U_n)_r\\
&\qquad=\sum_{x_n^\pm\in X_n}\frac{\nu_n}{\sigma_n}\Bigl(1_{\{0\}}(x_n^--y_n^-)1_{X_n}(x_n^+-y_n^+)v_{n+1}(y_{n+1}^-)u_{n+1}(y_{n+1}^+)\Bigr)\times\\
&\qquad\qquad\qquad\Bigl(u_n(x_n^-)u_n(x_n^+)\Bigr)\\
&\qquad=\frac{\nu_n}{\sigma_n}v_{n+1}(y_{n+1}^-)u_{n+1}(y_{n+1}^+)u_{n}(y_n^-)\sum_{x_n^+\in X_n}1_{X_n}(x_n^+-y_n^+)u_n(x_n^+)\\
&\qquad=\frac{\nu_n}{\sigma_n}v_{n+1}(y_{n+1}^-)u_{n+1}(y_{n+1}^+)u_{n}(y_n^-)(Q_n^*u_n)(y_n^+)\\
&\qquad=\nu_n v_{n+1}(y_{n+1}^-)u_{n+1}(y_{n+1}^+)u_{n}(y_n^-)v_n(y_n^+)\\
&\qquad=\nu_n(V_n)_k.
\end{align*}
Thus $(\Delta^t U_n)_k=\nu_n(V_n)_k$ for all $k\in K_n$.

Lastly, suppose that $n\ge1$ and that $k\in K_{n-1}$. Then
\[
(\Delta^t U_n)_k=\sum_{r\in R_n}\Delta_{r,k}(U_n)_r
=\sum_{r:(r,k)\in S_{n-1}^{(1)}}\Gamma_{r,k}(U_n)_r
=\sum_{r:(r,k)\in S_{n-1}^{(1)}}\gamma_{r-k}(U_n)_r.
\]
Now, if $r\in R_n$ and $k\in K_{n-1}$, then
\begin{align*}
r&=P_n+x_n^{-}H_n^-+x_n^+H_n^+,\\
k&=Q_{n-1}+y_{n-1}^-H_{n-1}^-+y_{n-1}^+H_{n-1}^++y_{n}^-H_{n}^-+y_{n}^+H_{n}^+,
\end{align*}
where $x_n^\pm\in X_n$ and $(y_{n-1}^-,y_{n-1}^+,y_{n}^-,y_{n}^+)\in X_{n-1}\times Y_{n-1}\times Y_{n}\times X_{n}$.
Then
\[
r-k=P_n-Q_{n-1}-y_{n-1}^-H_{n-1}^--y_{n-1}^+H_{n-1}^++(x_n^--y_{n}^-)H_{n}^-+(x_n^+-y_{n}^+)H_{n}^+,
\]
and so
\begin{align*}
\gamma_{r-k}
&=G_{n-1}^{(1)}(-y_{n-1}^-,-y_{n-1}^+,(x_n^--y_n^-),(x_n^+-y_n^+))\\
&=\frac{\mu_{n}}{\sigma_{n}} u_{n-1}(y_{n-1}^-)v_{n-1}(y_{n-1}^+)1_{X_n}(x_n^--y_n^-)1_{\{0\}}(x_n^+-y_n^+).
\end{align*}
Also we have
\[
(U_n)_r=u_n(x_n^-)u_n(x_n^+).
\]
Combining these observations, we deduce
\begin{align*}
&\sum_{r:(r,k)\in S_{n-1}^{(1)}}\gamma_{r-k}(U_n)_r\\
&\qquad=\sum_{x_n^\pm\in X_n}\frac{\mu_{n}}{\sigma_{n}} \Bigl(u_{n-1}(y_{n-1}^-)v_{n-1}(y_{n-1}^+)1_{X_n}(x_n^--y_n^-)1_{\{0\}}(x_n^+-y_n^+)\Bigr)\times\\
&\qquad\qquad\qquad\Bigl(u_n(x_n^-)u_n(x_n^+)\Bigr)\\
&\qquad=\frac{\mu_n}{\sigma_n}u_{n-1}(y_{n-1}^-)v_{n-1}(y_{n-1}^+)u_{n}(y_n^+)\sum_{x_n^-\in X_n}1_{X_n}(x_n^--y_n^-)u_n(x_n^-)\\
&\qquad=\frac{\mu_n}{\sigma_n}u_{n-1}(y_{n-1}^-)v_{n-1}(y_{n-1}^+)u_{n}(y_n^+)(Q_n^*u_n)(y_n^-)\\
&\qquad=\mu_n u_{n-1}(y_{n-1}^-)v_{n-1}(y_{n-1}^+)u_{n}(y_n^+)v_n(y_n^-)\\
&\qquad=\mu_n(V_{n-1})_k.
\end{align*}
Thus $(\Delta^t U_n)_k=\mu_n(V_{n-1})_k$ for all $k\in K_{n-1}$.

Putting everything together, we conclude that
\[
\Delta^t U_n=\mu_n V_{n-1}+\nu_{n}V_{n},
\]
with the first term on the right-hand side omitted if $n=0$.
This completes the proof.
\end{proof}

As remarked earlier, $(U_n)_{n\ge0}$ and $(V_n)_{n\ge0}$ 
are orthonormal sequences in $\ell^2(R)$ and $\ell^2(\ZZ^+)$
respectively. We may therefore form vectors
$U\in\ell^2(R)$ and $V\in\ell^2(\ZZ^+)$ by setting
\[
U:=\sum_{n\ge0}2^{-n}U_n
\quad\text{and}\quad
V:=\sum_{n\ge0}2^{-n}V_n.
\]

\begin{lemma}\label{L:eigenvector}
The vector $(-iV,U)$ lies in the domain of $D_\Delta^*$ and satisfies
\[
D_\Delta^*(-iV,U)=i(-iV,U).
\]
In particular, $\ker(D_\Delta^*-iI)\ne\{0\}$.
\end{lemma}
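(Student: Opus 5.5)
To show that $(-iV,U)\in\operatorname{dom}D_\Delta^*$ with $D_\Delta^*(-iV,U)=i(-iV,U)=(V,iU)$, we must check that for every $(x,y)\in c_{00}(\ZZ^+)\oplus c_{00}(R)$
\[
\bigl\langle (-iV,U),\,(\Delta^t y,\Delta x)\bigr\rangle_{\cH}=\bigl\langle (V,iU),\,(x,y)\bigr\rangle_{\cH}.
\]
Since the matrices are real, this amounts to the two identities
\[
\langle U,\Delta x\rangle=\langle V,x\rangle\quad(x\in c_{00}(\ZZ^+)),
\qquad
-i\langle V,\Delta^t y\rangle=i\langle U,y\rangle\quad(y\in c_{00}(R)),
\]
that is, $\langle U,\Delta x\rangle=\langle V,x\rangle$ and $\langle V,\Delta^t y\rangle=-\langle U,y\rangle$. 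By linearity it suffices to take $x=e_k$ and $y=e_r$. The required identities then become the coordinatewise statements $(\Delta^t U)_k=V_k$ for every $k\in\ZZ^+$ and $(\Delta V)_r=-U_r$ for every $r\in R$. Here the sums defining $(\Delta^tU)_k=\sum_r\Delta_{r,k}U_r$ and $(\Delta V)_r$ are finite, because each row and column of $\Delta$ has only finitely many nonzero entries: row $r\in R_m$ only meets columns in $K_m\cup K_{m-1}$, by Lemma~\ref{L:saturation}.

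For a fixed coordinate, only finitely many $U_n$ or $V_n$ contribute. We can therefore interchange and use Lemmas~\ref{L:DeltaVcalc} and \ref{L:DeltatUcalc} termwise:
\[
\Delta V=\sum_{n\ge0}2^{-n}(\nu_nU_n+\mu_{n+1}U_{n+1})
=\sum_{n\ge0}\bigl(2^{-n}\nu_n+2^{-(n-1)}\mu_n\bigr)U_n,
\]
with the convention $\mu_0=0$. Indeed $\mu_0=\tfrac43-\tfrac43=0$, so the $n=0$ case of Lemma~\ref{L:DeltatUcalc} fits the same pattern. The coefficient equals $2^{-n}(\nu_n+2\mu_n)=-2^{-n}$ by the first relation in \eqref{E:recurr}, so $\Delta V=-U$ coordinatewise.

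Similarly,
\[
\Delta^tU=\nu_0V_0+\sum_{n\ge1}2^{-n}(\mu_nV_{n-1}+\nu_nV_n)
=\sum_{n\ge0}2^{-n}\bigl(\nu_n+\tfrac12\mu_{n+1}\bigr)V_n=\sum_{n\ge 0}2^{-n}V_n=V,
\]
using the second relation in \eqref{E:recurr}.

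The main point to handle carefully is justifying the termwise interchange for a fixed coordinate. Take $r\in R_m$. By Lemma~\ref{L:saturation}, $\Delta_{r,k}\neq0$ forces $k\in K_m\cup K_{m-1}$. Since the $V_n$ are supported on $K_n$, only $V_m$ and $V_{m-1}$ contribute to $(\Delta V)_r$, and the computation reduces to a finite sum. The dual argument works for $k\in K_m$: only $U_m$ and $U_{m+1}$ contribute to $(\Delta^tU)_k$. For $k\notin\bigcup_n K_n$, the column of $\Delta$ vanishes and $V_k=0$, so the identity holds trivially.

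With both identities established, $(-iV,U)\in\operatorname{dom}D_\Delta^*$ and $D_\Delta^*(-iV,U)=(V,iU)=i(-iV,U)$. Finally, $U\ne0$ because the $U_n$ are orthonormal, so the vector is nonzero and $\ker(D_\Delta^*-iI)\ne\{0\}$.
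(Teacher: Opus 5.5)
Your proof is correct and follows the paper's argument. Both reduce the adjoint relation to $\langle U,\Delta x\rangle=\langle V,x\rangle$ and $\langle V,\Delta^t y\rangle=-\langle U,y\rangle$, then verify these by applying Lemmas~\ref{L:DeltaVcalc} and~\ref{L:DeltatUcalc} termwise and reindexing with \eqref{E:recurr} (using $\mu_0=0$). The only difference is bookkeeping: you test against basis vectors and justify the interchange by the block structure from Lemma~\ref{L:saturation}, whereas the paper pairs with general $x,y$ of finite support and uses the $\ell^2$-convergence of $U=\sum_n 2^{-n}U_n$ and $V=\sum_n 2^{-n}V_n$.
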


\begin{proof}
We need to show that, for all $(x,y)\in c_{00}(\ZZ^+)\oplus c_{00}(R)$,
\[
\Bigl\langle (-iV,U),\,D_\Delta(x,y)\Bigr\rangle_{\ell^2(\ZZ^+)\oplus\ell^2(R)}=
\Bigl\langle i(-iV,U),\,(x,y)\Bigr\rangle_{\ell^2(\ZZ^+)\oplus\ell^2(R)}.
\]
This is equivalent to proving that
\begin{align*}
\langle -V,\,\Delta^ty\rangle_{\ell^2(\ZZ^+)}
&=\langle U,\,y\rangle_{\ell^2(R)} 
\quad\forall y\in c_{00}(R),\\
\intertext{and}
\langle U,\,\Delta x\rangle_{\ell^2(R)}
&=\langle V,\,x\rangle_{\ell^2(\ZZ^+)}
\quad\forall x\in c_{00}(\ZZ^+),
\end{align*}
which we shall now do.

If $y\in c_{00}(R)$, then we have
\begin{align*}
&\langle -V,\,\Delta^t y\rangle_{\ell^2(\ZZ^+)}\\
&\quad=-\sum_{n\ge0}2^{-n}\langle V_n,\,\Delta^t y\rangle_{\ell^2(\ZZ^+)}\\
&\quad=-\sum_{n\ge0}2^{-n}\langle \Delta V_n,\, y\rangle_{\ell^2(R)}\\
&\quad=-\sum_{n\ge0}2^{-n}\langle \nu_nU_n+\mu_{n+1}U_{n+1},\, y\rangle_{\ell^2(R)} 
&\text{(by Lemma~\ref{L:DeltaVcalc})}\\
&\quad=-\sum_{n\ge0}2^{-n}\langle (\nu_n+2\mu_n)U_n,\, y\rangle_{\ell^2(R)}+2\mu_0\langle U_0,y\rangle_{\ell^2(R)}\\
&\quad=-\sum_{n\ge0}2^{-n}\langle -U_n,\,y\rangle_{\ell^2(R)}+0 
&\text{(by \eqref{E:recurr})}\\
&\quad=\langle U,\,y\rangle_{\ell^2(R)}.
\end{align*}
Similarly, if $x\in c_{00}(\ZZ^+)$, then we have
\begin{align*}
&\langle U,\,\Delta x\rangle_{\ell^2(R)}\\
&\quad=\sum_{n\ge0}2^{-n}\langle U_n,\,\Delta x\rangle_{\ell^2(R)}\\
&\quad=\sum_{n\ge0}2^{-n}\langle \Delta^t U_n,\, x\rangle_{\ell^2(\ZZ^+)}\\
&\quad=\sum_{n\ge1}2^{-n}\langle (\mu_nV_{n-1}+\nu_{n}V_{n}),\, x\rangle_{\ell^2(\ZZ^+)} +\langle \nu_0V_0,\,x\rangle_{\ell^2(\ZZ^+)}
&\text{(by Lemma~\ref{L:DeltatUcalc})}\\
&\quad=\sum_{n\ge0}2^{-n}\langle(\mu_{n+1}/2+\nu_n)V_n,\, x\rangle_{\ell^2(\ZZ^+)}\\
&\quad=\sum_{n\ge0}2^{-n}\langle V_n,\,x\rangle_{\ell^2(\ZZ^+)}
&\text{(by \eqref{E:recurr})}\\
&\quad=\langle V,\,x\rangle_{\ell^2(\ZZ^+)}.
\end{align*}
This completes the proof.
\end{proof}


\section{The operator $\cE$}

In this section we study the operator $\cE$ with the goal
of proving that it is a bounded operator.
We shall achieve this by decomposing it as the sum
of two operators $\cE'$ and $\cE''$, each of which is bounded
(for different reasons).

\subsection{The operator $\cE'$}
Recall that,
in the process of defining $\Delta$  in \S\ref{S:GDE},
we made use of the auxiliary sets 
\begin{align*}
S_n^{(0)}&:=\{(r,k)\in  R_n\times \ZZ^+: r-k\in D_n^{(0)}\},\\
S_n^{(1)}&:=\{(r,k)\in  R_{n+1}\times \ZZ^+: r-k\in D_n^{(1)}\}.
\end{align*}
To define $\cE'$, we now introduce the companion sets
\begin{align*}
T_n^{(0)}&:=\{(r,k)\in  R_{n+1}\times \ZZ^+: r-k\in D_n^{(0)}\},\\
T_n^{(1)}&:=\{(r,k)\in  R_{n}\times \ZZ^+: r-k\in D_n^{(1)}\}.
\end{align*}
We define
$\cE':c_{00}(\ZZ^+)\subset\ell^2(\ZZ^+)\to\ell^2(R)$ 
to be the operator whose matrix is given by
\[
\cE'_{r,k}:=
\begin{cases}
\Gamma_{r,k}, &(r,k)\in \cup_{n\ge0}(T_n^{(0)}\cup T_n^{(1)})\\
0, &(r,k)\in (R\times\ZZ^+)\setminus \cup_{n\ge0}(T_n^{(0)}\cup T_n^{(1)}).
\end{cases}
\]

\begin{lemma}\label{L:empty}
If $n$ is odd, then $T_n^{(0)}=\emptyset$.
If $n$ is even, then $T_n^{(1)}=\emptyset$.
\end{lemma}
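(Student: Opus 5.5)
The plan is to argue by contradiction using the decoding principle, Lemma~\ref{L:decode}. Suppose $(r,k)$ lies in $T_n^{(0)}$ (with $n$ odd) or in $T_n^{(1)}$ (with $n$ even). We write $k=r-d$, where $r$ is given in its coordinate form from the definition of $R_{n+1}$ or $R_n$, and $d$ in its coordinate form from the definition of $D_n^{(0)}$ or $D_n^{(1)}$. This expresses $k$ as a finite integer combination of the symbols $H_m^\pm,P_m,Q_m$. We will check that this combination satisfies the hypotheses of Lemma~\ref{L:decode} and that its principal coefficient is $-1$. Then $k<0$, contradicting $k\in\ZZ^+$, so the set is empty.

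\emph{Case $n$ odd.} Write $n=2j-1$ with $j\ge1$, and let $(r,k)\in T_n^{(0)}$. Then $r=P_{n+1}+x_{n+1}^-H_{n+1}^-+x_{n+1}^+H_{n+1}^+$ with $x_{n+1}^\pm\in X_{n+1}$. Also $d=r-k=P_n-Q_n+z_n^-H_n^-+z_n^+H_n^++z_{n+1}^-H_{n+1}^-+z_{n+1}^+H_{n+1}^+$ with $|z_m^\pm|\le N_m$. Hence
\[
k=P_{2j}-P_{2j-1}+Q_{2j-1}-z_n^-H_n^--z_n^+H_n^++(x_{n+1}^--z_{n+1}^-)H_{n+1}^-+(x_{n+1}^+-z_{n+1}^+)H_{n+1}^+ .
\]
All $H$-coefficients are bounded by $2N_m\le4N_m$, and the $P,Q$ coefficients are bounded by $1$, so Lemma~\ref{L:decode} applies. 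In the prescribed ordering, the block for this $j$ is $H_{2j}^\pm,H_{2j+1}^\pm,Q_{2j-1},Q_{2j},P_{2j},P_{2j-1}$. The symbols $H_{2j-1}^\pm$ belong to an earlier block (or to the initial block if $j=1$). So the largest symbol present is $P_{2j-1}$, whose coefficient is $-1$. Therefore $k<0$, a contradiction.

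\emph{Case $n$ even.} Write $n=2j$ and let $(r,k)\in T_n^{(1)}$. Then $r=P_{2j}+x_n^-H_n^-+x_n^+H_n^+$ and $d=P_{2j+1}-Q_{2j}+\sum z\,H$, so
\[
k=P_{2j}-P_{2j+1}+Q_{2j}+(\text{$H_n^\pm,H_{n+1}^\pm$ terms}),
\]
again with admissible coefficients. The symbol $P_{2j+1}$ appears as the last symbol of the block for $j+1$, namely $H_{2j+2}^\pm,H_{2j+3}^\pm,Q_{2j+1},Q_{2j+2},P_{2j+2},P_{2j+1}$. It therefore exceeds $P_{2j}$, $Q_{2j}$, $H_{2j}^\pm$ and $H_{2j+1}^\pm$, all of which lie in block $j$ (or in the initial block when $j=0$). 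So the principal coefficient is $-1$, and again $k<0$.

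The only delicate point is the bookkeeping of the symbol ordering, including the boundary cases $j=1$ in the odd case and $j=0$ in the even case, where the initial block $H_0^\pm,H_1^\pm,Q_0,P_0$ is involved. Once the ordering is checked, the conclusion follows directly from Lemma~\ref{L:decode}(i).
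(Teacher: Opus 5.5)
Your proposal is correct and follows the same route as the paper: write $k=r-(r-k)$ in coordinates, observe that the principal coefficient is that of $-P_n$ (for $n$ odd) or $-P_{n+1}$ (for $n$ even), and conclude $k<0$ by the decoding principle, contradicting $k\in\ZZ^+$. Your explicit verification of the coefficient bounds and of the block ordering, including the initial block for $j=1$ and $j=0$, simply spells out what the paper leaves implicit.
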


\begin{proof}
Suppose first that $n$ is odd and that, if possible, $(r,k)\in T_n^{(0)}$.
Then $r\in R_{n+1}$, so
\[
r=P_{n+1}+x_{n+1}^-H_{n+1}^-+x_{n+1}^+H_{n+1}^+,
\]
where $x_{n+1}^\pm\in X_{n+1}$. Also $r-k\in D_n^{(0)}$, so
\[
r-k=P_n-Q_n+z_n^-H_n^-+z_n^+H_n^++z_{n+1}^-H_{n+1}^-+z_{n+1}^+H_{n+1}^+,
\]
where $(z_n^-,z_n^+,z_{n+1}^-,z_{n+1}^+)\in\supp G_n^{(0)}
\subset \{0\}\times X_n\times(-Y_{n+1})\times(-X_{n+1})$.
Therefore
\[
k=-P_n+P_{n+1}+Q_n+ (\text{terms in~} H_n^\pm,H_{n+1}^\pm).
\]
According to the ordering defined in \S\ref{S:encoding},
since $n$ is odd, the principal term in this last sum is $-P_n$,
so, by the decoding principle Lemma~\ref{L:decode}, we have $k<0$.
On the other hand, since $(r,k)\in T_n^{(0)}$, we have $k\in \ZZ^+$,
a contradiction.
We conclude that
 $T_n^{(0)}=\emptyset$ whenever $n$ is odd.

Now suppose  that $n$ is even and that, if possible, $(r,k)\in T_n^{(1)}$.
Then $r\in R_{n}$, so
\[
r=P_{n}+x_{n}^-H_{n}^-+x_{n}^+H_{n}^+,
\]
where $x_{n}^\pm\in X_{n}$. Also $r-k\in D_n^{(1)}$, so
\[
r-k=P_{n+1}-Q_n+z_n^-H_n^-+z_n^+H_n^++z_{n+1}^-H_{n+1}^-+z_{n+1}^+H_{n+1}^+,
\]
where $(z_n^-,z_n^+,z_{n+1}^-,z_{n+1}^+)\in\supp G_n^{(1)}
\subset (-X_n)\times (-Y_n)\times X_{n+1}\times\{0\}$.
Therefore
\[
k=-P_{n+1}+P_{n}+Q_n+ (\text{terms in~} H_n^\pm,H_{n+1}^\pm).
\]
According to the ordering defined in \S\ref{S:encoding},
since $n$ is even, the principal term in this last sum is $-P_{n+1}$,
so, by the decoding principle, we have $k<0$. On the other hand, since $(r,k)\in T_n^{(1)}$, we have $k\in \ZZ^+$,
a contradiction. We conclude that $T_n^{(1)}=\emptyset$
whenever $n$ is even.
\end{proof}

\begin{lemma}\label{L:cE'bounded}
The entries in the matrix of $\cE'$ are square-summable.
Consequently $\cE'$ is a bounded operator.
\end{lemma}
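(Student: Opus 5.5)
The plan is to bound the Hilbert--Schmidt norm of $\cE'$ directly. Since $\cE'_{r,k}=\Gamma_{r,k}=\gamma_{r-k}$ on $\bigcup_n(T_n^{(0)}\cup T_n^{(1)})$ and vanishes elsewhere, we have
\[
\sum_{r,k}|\cE'_{r,k}|^2\le\sum_{n\ge0}\sum_{(r,k)\in T_n^{(0)}}|\gamma_{r-k}|^2+\sum_{n\ge0}\sum_{(r,k)\in T_n^{(1)}}|\gamma_{r-k}|^2.
\]
This is an inequality rather than an equality in case the union is not disjoint, but that is harmless. By Lemma~\ref{L:empty}, only the terms $T_n^{(0)}$ with $n$ even and $T_n^{(1)}$ with $n$ odd survive. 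Once the double sum is shown to be finite, the operator is Hilbert--Schmidt, hence bounded, and the lemma follows.

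The key counting step is as follows. For fixed $r$, the relation $r-k=d$ determines $k$ uniquely. Hence for each $d\in D_n^{(0)}$, the number of pairs $(r,k)\in T_n^{(0)}$ with $r-k=d$ is at most $|R_{n+1}|=N_{n+1}^2$. Therefore
\[
\sum_{(r,k)\in T_n^{(0)}}|\gamma_{r-k}|^2\le N_{n+1}^2\sum_{d\in D_n^{(0)}}|\gamma_d|^2=N_{n+1}^2\|G_n^{(0)}\|^2_{\ell^2(\ZZ^4)}=\frac{N_{n+1}^2|\nu_n|^2N_n}{\sigma_n^2}\le\frac{3}{2}\,\frac{N_{n+1}^2|\nu_n|^2}{N_n},
\]
using \eqref{E:G0norm} and Lemma~\ref{L:sigmabound}. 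In the same way, for $T_n^{(1)}$ we have $r\in R_n$, so $|R_n|=N_n^2$, and by \eqref{E:G1norm} we get
\[
\sum_{(r,k)\in T_n^{(1)}}|\gamma_{r-k}|^2\le\frac{3}{2}\,\frac{N_n^2|\mu_{n+1}|^2}{N_{n+1}}.
\]

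Finally, we sum only over the surviving indices. Taking $n=2j$ in the first bound gives $\sum_j N_{2j+1}^2|\nu_{2j}|^2/N_{2j}$. Taking $n=2j+1$ in the second gives $\sum_j N_{2j+1}^2|\mu_{2j+2}|^2/N_{2j+2}$. These are exactly the second and third series in \eqref{E:seriesconv}, so both are finite.

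The only real obstacle is structural: without Lemma~\ref{L:empty}, the terms $T_n^{(0)}$ with $n$ odd would contribute $N_{n+1}^2/N_n=B^{2n+2}/A^n$, which is divergent. The parity bookkeeping built into the encoding order is precisely what kills these terms. Apart from that, the argument is routine.
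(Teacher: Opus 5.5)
Your proposal is correct and matches the paper's proof essentially step for step. Both use Lemma~\ref{L:empty} to discard $T_n^{(0)}$ for odd $n$ and $T_n^{(1)}$ for even $n$, bound the number of pairs per $d$ by $|R_{n+1}|=N_{n+1}^2$ (resp.\ $|R_n|=N_n^2$), and then apply \eqref{E:G0norm}, \eqref{E:G1norm}, Lemma~\ref{L:sigmabound} and the last two series in \eqref{E:seriesconv}. The only difference is that the paper writes the first step as an equality, since the $D$-sets, and hence the $T$-sets, are pairwise disjoint; your inequality is equally valid.
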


\begin{proof}
From the definition of $\cE'$, we have
\begin{align*}
&\sum_{(r,k)\in R\times\ZZ^+}|\cE'_{r,k}|^2\\
&\quad=\sum_{n\ge0}\sum_{(r,k)\in T_n^{(0)}\cup T_n^{(1)}}|\Gamma_{r,k}|^2\\
&\qquad=\sum_{\substack{n\ge0\\n\text{~even}}}\sum_{(r,k)\in T_n^{(0)}}|\gamma_{r-k}|^2
+\sum_{\substack{n\ge0\\n\text{~odd}}}\sum_{(r,k)\in T_n^{(1)}}|\gamma_{r-k}|^2
&\text{(by Lemma~\ref{L:empty})}\\
&\quad\le\sum_{\substack{n\ge0\\n\text{~even}}}\sum_{r\in R_{n+1}}\sum_{d\in D_n^{(0)}}|\gamma_{d}|^2
+\sum_{\substack{n\ge0\\n\text{~odd}}}\sum_{r\in R_{n}}\sum_{d\in D_n^{(1)}}|\gamma_{d}|^2\\
&\quad=\sum_{\substack{n\ge0\\n\text{~even}}}|R_{n+1}|\,\|G_n^{(0)}\|_{\ell^2(\ZZ^4)}^2
+\sum_{\substack{n\ge0\\n\text{~odd}}}|R_{n}|\,\|G_n^{(1)}\|_{\ell^2(\ZZ^4)}^2\\
&\quad=\sum_{\substack{n\ge0\\n\text{~even}}}N_{n+1}^2\frac{N_n|\nu_n|^2}{\sigma_n^2}
+\sum_{\substack{n\ge0\\n\text{~odd}}}N_{n}^2\frac{N_{n+1}|\mu_{n+1}|^2}{\sigma_{n+1}^2}
&\text{(by \eqref{E:G0norm} and \eqref{E:G1norm})}\\
&\quad\le\frac{3}{2}\sum_{\substack{n\ge0\\n\text{~even}}}\frac{N_{n+1}^2|\nu_n|^2}{N_n}
+\frac{3}{2}\sum_{\substack{n\ge0\\n\text{~odd}}}\frac{N_{n}^2|\mu_{n+1}|^2}{N_{n+1}}
&\text{(by Lemma~\ref{L:sigmabound})}\\
&\quad=\frac{3}{2}\sum_{j\ge0}\frac{N_{2j+1}^2|\nu_{2j}|^2}{N_{2j}}
+\frac{3}{2}\sum_{j\ge0}\frac{N_{2j+1}^2|\mu_{2j+2}|^2}{N_{2j+2}}
<\infty
&\text{(by \eqref{E:seriesconv})}\\
\end{align*}
This completes the proof.
\end{proof}

\subsection{The operator $\cE''$}
The operator $\cE''$ is what is left over, namely
\[
\cE'':=\cE-\cE'=\Gamma-\Delta-\cE'.
\]
Explicitly, it is the operator 
$\cE'':c_{00}(\ZZ^+)\subset\ell^2(\ZZ^+)\to\ell^2(R)$ whose matrix is given by
\begin{equation}\label{E:cE''}
\cE''_{r,k}:=
\begin{cases}
0, &(r,k)\in \cup_{n\ge0}(S_n^{(0)}\cup S_n^{(1)}\cup T_n^{(0)}\cup T_n^{(1)})\\
\Gamma_{r,k}, &(r,k)\in (R\times\ZZ^+)\setminus \cup_{n\ge0}(S_n^{(0)}\cup S_n^{(1)}\cup T_n^{(0)}\cup T_n^{(1)}).
\end{cases}
\end{equation}

\begin{lemma}\label{L:column}
The matrix of $\cE''$ has at most one non-zero entry in each column.
\end{lemma}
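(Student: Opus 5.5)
Fix a column index $k\in\ZZ^+$ and suppose $(r,k)$ and $(r',k)$ are two positions with $\cE''_{r,k}\ne0$ and $\cE''_{r',k}\ne0$. The goal is to show $r=r'$.

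Since $\cE''_{r,k}=\gamma_{r-k}\ne0$, we have $r\in R_m$ and $r-k\in D_n^{(\epsilon)}$ for some $m,n\ge0$ and $\epsilon\in\{0,1\}$. Because $(r,k)$ lies in none of the sets $S_n^{(\epsilon)},T_n^{(\epsilon)}$, the pair of indices is ``mismatched'':
\begin{itemize}
\item if $\epsilon=0$, then $m\notin\{n,n+1\}$;
\item if $\epsilon=1$, then $m\notin\{n,n+1\}$ as well.
\end{itemize}
This is because $S_n^{(0)}$ and $T_n^{(0)}$ together cover $m\in\{n,n+1\}$, and similarly for $S_n^{(1)},T_n^{(1)}$.

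Write $k=r-(r-k)$ in the symbol basis. For $\epsilon=0$,
\[
k=P_m-P_n+Q_n+x_m^\pm H_m^\pm-(\text{$H_n^\pm,H_{n+1}^\pm$ terms}).
\]
For $\epsilon=1$, the same holds with $P_n$ replaced by $P_{n+1}$. All coefficients are within the bounds of Lemma~\ref{L:decode}. Since $m\ne n$ (respectively $m\ne n+1$), the $P$-terms do not cancel, and we obtain a formal expansion of $k$ with a $+1$ coefficient on $P_m$, a $-1$ coefficient on $P_n$ or $P_{n+1}$, and a $+1$ coefficient on $Q_n$.

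By Lemma~\ref{L:decode}(ii), the representation of $k$ is unique: if two such sums are equal, their difference has bounded coefficients and hence all its coefficients vanish. Apply this to the two expansions of $k$ arising from $(r,m,n,\epsilon)$ and $(r',m',n',\epsilon')$. The difference has coefficients bounded by $8$ on the $P,Q$ symbols and by $8N$ on the $H$ symbols. These bounds exceed those of Lemma~\ref{L:decode}, so I will check that its proof still works. The estimate there had room to spare, since $c_M$ dominates $8MB^M(8B)^{2^M}$ when $8B>8M$-type factors, which holds once $B$ is large.

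Comparing coefficients, $Q_n=Q_{n'}$ gives $n=n'$. The $P$ coefficients $+P_m-P_{n+\epsilon}$ and $+P_{m'}-P_{n+\epsilon'}$ must then agree. Since $m\ne n+\epsilon$, these are two distinct nonzero terms, so $m=m'$ and $\epsilon=\epsilon'$.

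With $m,n,\epsilon$ fixed, the coefficients of $H_m^\pm$ in $k$ determine $x_m^\pm$ once the $z$-terms are accounted for. The one danger is that $H_m^\pm$ coincides with one of $H_n^\pm,H_{n+1}^\pm$. This cannot happen, because $m\notin\{n,n+1\}$. So the $H_m^\pm$ coefficient of $k$ is exactly $x_m^\pm$, and the $H_m^\pm$ coefficient of $k$ from the second expansion is $x'^\pm_m$. Hence $r=P_m+x_m^-H_m^-+x_m^+H_m^+=r'$.

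The main obstacle is justifying the uniqueness of the expansion of $k$ when coefficients are allowed up to about $8N_n$ and $8$. I expect to handle this by rerunning the proof of Lemma~\ref{L:decode} with larger constants: the double-exponential growth of $c_m$ easily absorbs a factor of $2$. Alternatively, I would subtract the two expressions for $r-k$ and $r'-k$, each of which is the unique representation of a positive integer, and use the decoding principle directly on $r-r'$. A last bookkeeping subtlety is the case where $P_m$ equals a symbol appearing with the opposite sign. This is excluded by $m\ne n+\epsilon$, which is exactly where the mismatch condition is used.
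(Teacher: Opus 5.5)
Your proof is correct and is essentially the paper's argument. You write $k=r-(r-k)$ in two ways and use the decoding principle to match, in turn, the $Q$-coefficients (giving $n=n'$), the $P$-coefficients (where the mismatch $m\notin\{n,n+1\}$ prevents cancellation and forces $m=m'$ and $\epsilon=\epsilon'$), and finally the $H_m^\pm$-coefficients.

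The only quibble is that no enlarged version of Lemma~\ref{L:decode} is needed. Because $m\notin\{n,n+1\}$, each single expansion has $P$- and $Q$-coefficients of modulus at most $1$ and $H_j^\pm$-coefficients of modulus at most $N_j$. The difference therefore already satisfies the lemma's bounds $4$ and $4N_j$ as stated. Your appeal to ``$B$ large'' is also misplaced in any case, since $8MB^M<(8B)^{2^M}$ holds for every $M\ge1$ and every $B\ge1$.
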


\begin{proof}
Let $k\in\ZZ^+$, and suppose that $\cE''_{r,k}\ne0$ and $\cE''_{r',k}\ne0$. Our goal is to show that $r=r'$.

Since $r,r'\in R$, there exist integers $\ell,\ell'\ge0$ such that
$r\in R_\ell$ and $r'\in R_{\ell'}$. Thus
\begin{align*}
r&=P_\ell+x_\ell^-H_\ell^-+x_\ell^+H_\ell^+,\\
r'&=P_{\ell'}+y_{\ell'}^-H_{\ell'}^-+y_{\ell'}^+H_{\ell'}^+,
\end{align*}
where $|x_\ell^\pm|\le N_\ell$ and  $|y_{\ell'}^\pm|\le N_{\ell'}$.
Also, since $\cE''_{r,k}\ne0$ and $\cE''_{r',k}\ne0$,
we must have $\Gamma_{r,k}\ne0$ and $\Gamma_{r',k}\ne0$,
so there exist integers $n,n'\ge0$ such that $r-k\in D_n^{(0)}\cup D_n^{(1)}$ and $r'-k\in D_{n'}^{(0)}\cup D_{n'}^{(1)}$.
Thus 
\begin{align*}
r-k&=P_m-Q_n+z_n^-H_n^-+z_n^+H_n^++z_{n+1}^-H_{n+1}^-+z_{n+1}^+H_{n+1}^+,\\
r'-k&=P_{m'}-Q_{n'}+w_{n'}^-H_{n'}^-+w_{n'}^+H_{n'}^++w_{n'+1}^-H_{n'+1}^-+w_{n'+1}^+H_{n'+1}^+,
\end{align*} 
where 
\[
m\in\{n,n+1\}
\quad\text{and}\quad
m'\in\{n',n'+1\},
\]
and where
\[
|z_n^\pm|\le N_n, \quad |z_{n+1}^\pm|\le N_{n+1},
\quad |w_{n'}^\pm|\le N_{n'}\quad\text{and}\quad 
 |w_{n'+1}^\pm|\le N_{n'+1}.
\]
Further,
since $\cE''_{r,k}\ne0$ and $\cE''_{r',k}\ne0$,
it follows from \eqref{E:cE''} that 
$(r,k)\notin S_n^{(0)}\cup S_n^{(1)}\cup T_n^{(0)}\cup T_n^{(1)}$
and $(r',k)\notin S_{n'}^{(0)}\cup S_{n'}^{(1)}\cup T_{n'}^{(0)}\cup T_{n'}^{(1)}$,
whence 
\[
\ell\notin\{n,n+1\} 
\quad\text{and}\quad 
\ell'\notin\{n',n'+1\}.
\]
Since 
\[
r-(r-k)=k=r'-(r'-k),
\]
we have
\begin{equation}\label{E:bigeqn}
\begin{aligned}
&P_\ell-P_m+Q_n+(x_\ell^-H_\ell^-+x_\ell^+H_\ell^+)-
(z_n^-H_n^-+z_n^+H_n^++z_{n+1}^-H_{n+1}^-+z_{n+1}^+H_{n+1}^+)\\
&=P_{\ell'}-P_{m'}+Q_{n'}+(y_{\ell'}^-H_{\ell'}^-+y_{\ell'}^+H_{\ell'}^+)-
(w_{n'}^-H_{n'}^-+w_{n'}^+H_{n'}^++w_{n'+1}^-H_{n'+1}^-+w_{n'+1}^+H_{n'+1}^+).
\end{aligned}
\end{equation}
By the decoding principle Lemma~\ref{L:decode},
the $Q$-coefficients in \eqref{E:bigeqn} must agree, so 
\[
n=n'.
\]
Thus
$m,m'\in\{n,n+1\}$ and $\ell,\ell'\notin\{n,n+1\}$,
whence $\{\ell,\ell'\}\cap\{m,m'\}=\emptyset$.
As the $P$-coefficients in  \eqref{E:bigeqn} must agree,
it follows that 
\[
\ell=\ell'
\quad\text{and}\quad
m=m'.
\]
We are left with the equation
\begin{align*}
&(x_\ell^-H_\ell^-+x_\ell^+H_\ell^+)-
(z_n^-H_n^-+z_n^+H_n^++z_{n+1}^-H_{n+1}^-+z_{n+1}^+H_{n+1}^+)\\
&=(y_{\ell}^-H_{\ell}^-+y_{\ell}^+H_\ell^+)-
(w_{n}^-H_{n}^-+w_{n}^+H_{n}^++w_{n+1}^-H_{n+1}^-+w_{n+1}^+H_{n+1}^+).
\end{align*}
Since $\ell\notin\{n,n+1\}$, the decoding principle implies that
\[
x_\ell^-=y_\ell^-
\quad\text{and}\quad
x_\ell^+=y_\ell^+.
\]
Hence, finally, $r=r'$, as desired.
\end{proof}

\begin{lemma}\label{L:cE''bounded}
$\cE''$ is a bounded operator.
\end{lemma}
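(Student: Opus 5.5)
The plan is to combine Lemma~\ref{L:column} with a uniform bound on the entries of $\cE''$. An operator whose matrix has at most one non-zero entry in each column, with all entries bounded in modulus by some $M$, is bounded with norm at most $M$. Indeed, for $x\in c_{00}(\ZZ^+)$, each coordinate $k$ of $x$ contributes to exactly one row $r(k)$. So
\[
\|\cE''x\|^2=\sum_{r\in R}\Bigl|\sum_{k:\,r(k)=r}\cE''_{r,k}x_k\Bigr|^2 .
\]
This is not quite $\le M^2\|x\|^2$, because several columns may feed the same row. The correct statement is that such an operator is a \emph{sum of rank-one pieces with disjoint column supports}. Grouping the columns by the row they hit, we get
\[
\|\cE''x\|^2=\sum_r\Bigl|\sum_{k\in C_r}\cE''_{r,k}x_k\Bigr|^2\le\sum_r\Bigl(\sum_{k\in C_r}|\cE''_{r,k}|^2\Bigr)\Bigl(\sum_{k\in C_r}|x_k|^2\Bigr),
\]
where the sets $C_r$ are disjoint. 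Hence $\|\cE''\|^2\le\sup_{r\in R}\sum_{k}|\cE''_{r,k}|^2$, that is, the norm is controlled by the sup of the row $\ell^2$-norms.

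The key step is therefore to bound each row norm uniformly. For fixed $r$, we have
\[
\sum_k|\cE''_{r,k}|^2\le\sum_k|\Gamma_{r,k}|^2=\sum_k|\gamma_{r-k}|^2\le\sum_{d\in\ZZ}|\gamma_d|^2=\|\gamma\|_{\ell^2}^2,
\]
since $k\mapsto r-k$ is injective. This is finite, as shown in the construction of $\gamma$. Combining the two estimates gives $\|\cE''x\|\le\|\gamma\|_{\ell^2}\|x\|$ for all $x\in c_{00}(\ZZ^+)$, so $\cE''$ extends to a bounded operator.

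I expect no real obstacle. The only care needed is the Cauchy--Schwarz step: it must use the disjointness of the column groups $C_r$, which is exactly what Lemma~\ref{L:column} provides. It must also use the Toeplitz-type row bound, which holds because each row of $\Gamma$ is a reflected copy of the sequence $\gamma$.
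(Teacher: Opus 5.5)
Your proof is correct and is essentially the paper's argument: you apply Cauchy--Schwarz row by row over the column sets $C_r$ (the paper's $\{r-d: d\in Z_r\}$), which are disjoint by Lemma~\ref{L:column}, and bound each row by $\sum_k|\gamma_{r-k}|^2\le\|\gamma\|_{\ell^2}^2$, which yields $\|\cE''x\|\le\|\gamma\|_{\ell^2}\|x\|$. You should delete the opening claim that bounded entries alone would suffice, since it is false and you abandon it yourself anyway.
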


\begin{proof}
For $r\in R$, set 
\[
Z_r:=\{r-k:k\in\ZZ^+,\,\cE''_{r,k}\ne0\}.
\]
Then, for $x\in c_{00}(\ZZ^+)$, we have
\[
(\cE''x)_r=\sum_{k\in\ZZ^+}\cE''_{r,k}x_k=\sum_{d\in Z_r}\cE''_{r,r-d}x_{r-d}=\sum_{d\in Z_r}\gamma_d x_{r-d}.
\]
Hence, using Cauchy--Schwarz, we have
\begin{align*}
\sum_{r\in R}|(\cE''x)_r|^2
&=\sum_{r\in R}\Bigl|\sum_{d\in Z_r}\gamma_d x_{r-d}\Bigr|^2\\
&\le\sum_{r\in R}\Bigl(\sum_{d\in Z_r}|\gamma_d|^2\Bigr)\Bigl(\sum_{d\in Z_r}|x_{r-d}|^2\Bigr)\\
&\le \|\gamma\|_{\ell^2(\ZZ^+)}^2\sum_{r\in R}\sum_{d\in Z_r}|x_{r-d}|^2\\
&\le  \|\gamma\|_{\ell^2(\ZZ^+)}^2\sum_{k\in\ZZ^+}|x_k|^2,
\end{align*}
where the final inequality follows from the following consequence
of Lem\-ma~\ref{L:column}:
an integer $k\in \ZZ^+$ can be expressed in at most one way
as $r-d$, where $r\in R$ and $d\in Z_r$.
Thus we have
\[
\|\cE''x\|_{\ell^2(R)}\le  \|\gamma\|_{\ell^2(\ZZ^+)}\|x\|_{\ell^2(\ZZ^+)}
\quad(x\in c_{00}(\ZZ^+)),
\]
which shows that $\cE''$ is a bounded operator.
\end{proof}

\subsection{The operator $\cE$}

Finally, we reach the goal set at the beginning of the section.

\begin{lemma}\label{L:cEbounded}
$\cE$ is a bounded operator.
\end{lemma}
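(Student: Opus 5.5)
The plan is to write $\cE=\cE'+\cE''$ on $c_{00}(\ZZ^+)$ and then add the two bounds already obtained. To justify the decomposition, I will compare matrix entries. By definition $\cE=\Gamma-\Delta$. Its entries are therefore $\Gamma_{r,k}$ off $\bigcup_n(S_n^{(0)}\cup S_n^{(1)})$ and $0$ on it. Next I split the complement of $\bigcup_n(S_n^{(0)}\cup S_n^{(1)})$ into the part lying in $\bigcup_n(T_n^{(0)}\cup T_n^{(1)})$, where $\cE'$ carries the entry $\Gamma_{r,k}$, and the rest, where $\cE''$ carries it by \eqref{E:cE''}.

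For this to give $\cE'+\cE''=\cE$ entrywise, the $T$-sets must not overlap the $S$-sets, and no pair may be counted twice in $\cE'$. Otherwise $\cE'$ would contribute $\Gamma_{r,k}$ on some $S$-entry, where $\cE$ is zero. I will check this with the decoding principle, Lemma~\ref{L:decode}.

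First, the sets $D_n^{(0)},D_n^{(1)}$ $(n\ge0)$ are pairwise disjoint. So for a given pair $(r,k)$ with $\Gamma_{r,k}\ne0$, the difference $r-k$ lies in exactly one of them. Second, the sets $R_n$ are pairwise disjoint, so $r$ lies in exactly one $R_\ell$. The membership conditions then separate the cases:
\begin{itemize}
\item $(r,k)\in S_n^{(0)}$ requires $\ell=n$, while $(r,k)\in T_n^{(0)}$ requires $\ell=n+1$;
\item $(r,k)\in S_n^{(1)}$ requires $\ell=n+1$, while $(r,k)\in T_n^{(1)}$ requires $\ell=n$.
\end{itemize}
Hence each pair lies in at most one of the sets $S_n^{(i)},T_n^{(i)}$. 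Entries with $\Gamma_{r,k}=0$ contribute nothing to any of the operators. This gives $\cE=\cE'+\cE''$ on $c_{00}(\ZZ^+)$.

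It then follows that, for every $x\in c_{00}(\ZZ^+)$,
\[
\|\cE x\|\le\|\cE'x\|+\|\cE''x\|\le\bigl(\|\cE'\|+\|\gamma\|_{\ell^2(\ZZ^+)}\bigr)\|x\|,
\]
using Lemma~\ref{L:cE'bounded} for $\cE'$ and Lemma~\ref{L:cE''bounded} for $\cE''$. So $\cE$ extends to a bounded operator $\ell^2(\ZZ^+)\to\ell^2(R)$.

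The only real obstacle is the bookkeeping in the disjointness check, which reduces to the uniqueness statements from the decoding principle. Everything else is immediate from the earlier lemmas.
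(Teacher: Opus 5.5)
Your proof is correct and matches the paper's, which writes $\cE=\cE'+\cE''$ and invokes Lemmas~\ref{L:cE'bounded} and~\ref{L:cE''bounded}. Since the paper defines $\cE''$ as $\cE-\cE'$, the decomposition holds by definition; your disjointness check of the $S$- and $T$-sets instead verifies that this $\cE''$ has the explicit matrix \eqref{E:cE''} used in Lemma~\ref{L:column}, a point the paper leaves implicit.
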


\begin{proof}
We have $\cE=\cE'+\cE''$,
 where $\cE'$ is bounded (by Lemma~\ref{L:cE'bounded}),
 and
 $\cE''$ is also bounded (by Lemma~\ref{L:cE''bounded}). 
The result follows.
\end{proof}


\section{Solution to Problem~\ref{Pb:Anal1} and proof of Theorem~\ref{T:main}}

We now have everything that we need to prove the main result.
First, the following result solves Problem~\ref{Pb:Anal1}.

\begin{theorem}\label{T:soln}
There exist sequences $\alpha,\beta\in\ell^2(\ZZ^+,\CC)$ and $\gamma\in\ell^2(\ZZ^+,\RR)$, none of them identically zero,  such that
\begin{align}
\label{E:cond1'}\sum_{j\ge k}\gamma_{j-k}\alpha_{j}&=\beta_k \quad(k\in\ZZ^+),\\ 
\label{E:cond2'}\sum_{0\le k\le j}\gamma_{j-k}\beta_k&=-\alpha_j  \quad(j\in \supp\alpha).
\end{align}
\end{theorem}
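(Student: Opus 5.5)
The proof will simply assemble the pieces already built. We take $\gamma\in\ell^2(\ZZ^+,\RR)$ and $R=\bigcup_{n\ge0}R_n$ as constructed in \S\ref{S:construction}. There we checked that $\gamma$ is real, square-summable and not identically zero. With these choices we form $\Gamma$ as in \S\ref{S:strategy}, and we use the splitting $\Gamma=\Delta+\cE$ of \S\ref{S:GDE}. Both $\Delta$ and $\cE$ have real matrices, since every entry is either $0$ or some $\gamma_{r-k}$, and $\gamma$ is real.

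First we invoke Lemma~\ref{L:eigenvector}. It says that $(-iV,U)$ is a nonzero element of $\ker(D_\Delta^*-iI)$; it is nonzero because $U_0\ne0$. Next, Lemma~\ref{L:cEbounded} tells us that $\cE$ extends to a bounded operator from $\ell^2(\ZZ^+)$ to $\ell^2(R)$. This is exactly the hypothesis needed for Lemma~\ref{L:perturb}, which then gives $\ker(D_\Gamma^*-iI)\ne\{0\}$. The mechanism behind that lemma is that a bounded self-adjoint perturbation preserves essential self-adjointness. The realness of $\gamma$ is then used to pass from one of the deficiency spaces $\ker(D_\Gamma^*\mp iI)$ to the other.

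Finally, Lemma~\ref{L:ker} applied to $(\gamma,R)$ produces nonzero $\alpha,\beta\in\ell^2(\ZZ^+,\CC)$, with $\supp\alpha\subset R$, satisfying \eqref{E:cond1} and \eqref{E:cond2}. These are exactly \eqref{E:cond1'} and \eqref{E:cond2'}. Together with the fact that $\gamma\not\equiv0$, this proves the theorem.

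All the substantive work has been done in the preceding lemmas, and no step here is a real obstacle. The one point to watch is that the hypotheses of Lemma~\ref{L:perturb} hold as stated. Specifically, $D_\Gamma$ and $D_\Delta$ must share the domain $c_{00}(\ZZ^+)\oplus c_{00}(R)$, and $\Gamma x$ must lie in $\ell^2(R)$ for finitely supported $x$. The latter holds because each column of $\Gamma$ is a reversed portion of $\gamma$, hence square-summable. With those checks done, Theorem~\ref{T:equiv} immediately yields Theorem~\ref{T:main}.
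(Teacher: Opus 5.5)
Your proposal is correct and follows the paper's proof exactly: Lemma~\ref{L:eigenvector} supplies a nonzero element of $\ker(D_\Delta^*-iI)$, and Lemma~\ref{L:cEbounded} shows $\cE$ is bounded. Then Lemma~\ref{L:perturb} transfers this to $\ker(D_\Gamma^*-iI)\ne\{0\}$, and Lemma~\ref{L:ker} produces $\alpha,\beta$. (One trivial slip: the columns of $\Gamma$ are shifted portions of $\gamma$ rather than reversed ones, since it is the rows that are reversed, but square-summability holds either way.)
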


\begin{proof}
Construct $\gamma,R,\Gamma,\Delta,\cE$ as in
\S\ref{S:construction}. 
By Lemma~\ref{L:eigenvector},
$\ker (D_\Delta^*-iI)\ne\{0\}$.
By Lemma~\ref{L:cEbounded},
the operator $\cE$ is bounded.
As $\Gamma=\Delta+\cE$,
it follows from Lemma~\ref{L:perturb}
that $\ker (D_\Gamma^*-iI)\ne\{0\}$.
Hence, by Lemma~\ref{L:ker}, 
there exist non-zero $\alpha,\beta\in\ell^2(\ZZ^+,\CC)$
such that \eqref{E:cond1'} and \eqref{E:cond2'} both hold.
\end{proof}

Finally, we deduce the main result:

\begin{proof}[Proof of Theorem~\ref{T:main}]
Combine Theorems~\ref{T:equiv} and \ref{T:soln}.
\end{proof}


\section*{AI tools declaration}

In preparing this work, the author used ChatGPT to develop and explore strategies for the solution of Problem~\ref{Pb:Anal1}. The final mathematical  text was written entirely by the author, who thoroughly corrected, verified, and streamlined the proposed strategy. The author accepts full responsibility for the accuracy and integrity of the final results presented in this paper.

\section*{Acknowledgements}

The author thanks the following people for valuable discussions on
the topic of this article: Eugenio Dellepiane, Soumitra Ghara, 
Bartosz Malman, Javad Mashreghi, Ryan O'Loughlin, Marcu-Antone Orsoni, Pierre-Olivier Paris\'e,
Annabelle Walsh and Malik Younsi.

\bibliographystyle{plain}
\bibliography{biblist}

\begin{thebibliography}{10}

\bibitem{dBR66a}
L.~de~Branges and J.~Rovnyak.
\newblock Canonical models in quantum scattering theory.
\newblock In {\em Perturbation {T}heory and its {A}pplications in {Q}uantum
  {M}echanics ({P}roc. {A}dv. {S}em. {M}ath. {R}es. {C}enter, {U}.{S}. {A}rmy,
  {T}heoret. {C}hem. {I}nst., {U}niv. of {W}isconsin, {M}adison, {W}is.,
  1965)}, pages 295--392. Wiley, New York-London-Sydney, 1966.

\bibitem{dBR66b}
L.~de~Branges and J.~Rovnyak.
\newblock {\em Square summable power series}.
\newblock Holt, Rinehart and Winston, New York-Toronto-London, 1966.

\bibitem{EFKMR16}
O.~El-Fallah, E.~Fricain, K.~Kellay, J.~Mashreghi, and T.~Ransford.
\newblock Constructive approximation in de {B}ranges-{R}ovnyak spaces.
\newblock {\em Constr. Approx.}, 44(2):269--281, 2016.

\bibitem{FM16}
E.~Fricain and J.~Mashreghi.
\newblock {\em The theory of {$\cal{H}$$(b)$} spaces. {V}ol. 2}, volume~21 of
  {\em New Mathematical Monographs}.
\newblock Cambridge University Press, Cambridge, 2016.

\bibitem{GMR24}
S.~Ghara, J.~Mashreghi, and T.~Ransford.
\newblock Summability and duality.
\newblock {\em Publ. Mat.}, 68(2):407--429, 2024.

\bibitem{MPR22a}
J.~Mashreghi, P.-O. Paris\'e, and T.~Ransford.
\newblock Failure of approximation of odd functions by odd polynomials.
\newblock {\em Constr. Approx.}, 56(1):35--43, 2022.

\bibitem{MPR22b}
J.~Mashreghi, P.-O. Paris\'e, and T.~Ransford.
\newblock Power-series summability methods in de {B}ranges--{R}ovnyak spaces.
\newblock {\em Integral Equations Operator Theory}, 94(2):Paper No. 20, 17,
  2022.

\bibitem{MR18}
J.~Mashreghi and T.~Ransford.
\newblock Outer functions and divergence in de {B}ranges--{R}ovnyak spaces.
\newblock {\em Complex Anal. Oper. Theory}, 12(4):987--995, 2018.

\bibitem{MR19}
J.~Mashreghi and T.~Ransford.
\newblock Linear polynomial approximation schemes in {B}anach holomorphic
  function spaces.
\newblock {\em Anal. Math. Phys.}, 9(2):899--905, 2019.

\bibitem{Ra26}
T.~Ransford.
\newblock On the coefficient formula for de {B}ranges--{R}ovnyak norms.
\newblock {\em Integral Equations Operator Theory}.
\newblock In press. Preprint arXiv:2605.30114.

\bibitem{Sa86}
D.~Sarason.
\newblock Doubly shift-invariant spaces in {$H^2$}.
\newblock {\em J. Operator Theory}, 16(1):75--97, 1986.

\bibitem{Sa94}
D.~Sarason.
\newblock {\em Sub-{H}ardy {H}ilbert spaces in the unit disk}, volume~10 of
  {\em University of Arkansas Lecture Notes in the Mathematical Sciences}.
\newblock John Wiley \& Sons, Inc., New York, 1994.
\newblock A Wiley-Interscience Publication.

\bibitem{Te14}
G.~Teschl.
\newblock {\em Mathematical methods in quantum mechanics}, volume 157 of {\em
  Graduate Studies in Mathematics}.
\newblock American Mathematical Society, Providence, RI, second edition, 2014.
\newblock With applications to Schr\"odinger operators.

\end{thebibliography}

\end{document}